\title{Minimal algebraic complexes over $D_{4n}$}
\author{W.H. Mannan} 
\address{Mathematics and Statistics\\ Lancaster University\\\newline Lancaster LA\textup{1 4}YF\\United Kingdom
%Second author address here as will not compile in proper place:
\newline\newline Department of Mathematics\\ University College London\\\newline London WC\textup{1E 6}BT\\United Kingdom}
\email{wajid@mannan.info}
\author{S. O'Shea} 
\email{s.o'shea@ucl.ac.uk}
\urladdr{http://iris.ucl.ac.uk/iris/browse/profile?upi=SOSHE53}
\numberwithin{equation}{section}
\newtheorem{theorem}{Theorem}[section]
\newtheorem{lemma}[theorem]{Lemma}
\theoremstyle{definition}
\newtheorem{definition}[theorem]{Definition}
\def\co{\colon\thinspace}
\renewcommand {\Z} {\mathbb Z}
\renewcommand {\Q} {\mathbb Q}
\newcommand {\ZG} {\mathbb {Z}[G]}
\newcommand {\ZD} {\mathbb {Z}[D_{4n}]}
\newcommand {\ZC} {\mathbb {Z}[C_{2}]}
\newcommand {\QC} {\mathbb {Q}[C_{2}]}
\newcommand {\Zd} {\mathbb {Z}[D_{8}]}
\newcommand {\QD} {\mathbb {Q}[D_{4n}]}
\begin{document}

\begin{abstract}    
We show that cancellation of free modules holds in the stable class $\Omega_3(\Z)$ over dihedral groups of order $4n$.  In light of a recent result on realizing $k$--invariants for these groups, this completes the proof that all all dihedral groups satisfy the D(2) property.
\end{abstract}

\maketitle

\section{Introduction}\label{intro}

In 1965 Wall showed  that for $n>2$, if a finite cell complex is cohomologically $n$ dimensional (in the  sense of having no non-trivial cohomology in dimensions above $n$ with respect to any coefficient bundle), then it is in fact homotopy equivalent to an actual $n$ dimensional cell complex \cite{Wall}.  Subsequently it was shown by Swan and Stallings that the only cohomologically 1 dimensional finite cell complexes are disjoint unions of wedges of circles \cite{Stal, Swan1}.  However decades later the case $n=2$ remains a major open problem, known as Wall's D(2)--problem.  

The problem may be phrased in terms of realizing algebraic complexes geometrically \cite{John1, Mann2, Mann4}, or in terms of group presentations \cite{Mann5}.  Cell complexes which potentially offer a counterexample to Wall's D(2)--problem have been postulated \cite{John3, Brid}, though proving that they are counterexamples would appear to require some new obstruction.  Where progress has been made is extending Wall's result to the case $n=2$ for all finite cell complexes with a specified fundamental group.

\begin{definition}
We say that a group $G$ satisfies the {\emph{D(2) property}} if all cohomologically 2 dimensional finite cell complexes with fundamental group $G$ are in fact homotopy equivalent to a 2 dimensional finite cell complex.
\end{definition} 

At present, the D(2) property in known to hold only in comparatively few cases. These include cyclic groups, products of the form $C_{\infty} \times C_n$  \cite{Edwa} and, more relevantly to the present paper, the dihedral groups $D_{4n+2}$  \cite{John, John1}.

The first finite nonabelian non periodic group shown to have the D(2) property was the dihedral group of order 8 \cite{Mann1}.  The methodology adopted in \cite{Mann1} was an instance of the approach developed and laid out in \cite{John1}.  This approach for verifying the D(2) property for a given group $G$ involves two steps.

The first of these is to classify all possible modules over $\ZG$ which may arise as the second homotopy group of a connected 
cohomologically 2 dimensional finite cell complex $X$.  The Hurewicz homomorphism identifies $\pi_2(X)$ with $H_2(\tilde{X})$.  As $X$ is cohomologically 2 dimensional, consideration of the homotopy type of the algebraic cellular chain complex  of $\tilde{X}$ yields  an exact sequence of $\ZG$ modules: 
$$0 \to \pi_2(X)  \to F_2 \to F_1 \to F_0 \to \Z \to 0,$$
where the $F_i$ are finitely generated (henceforth denoted f.g.\!) free \cite{Mann3} modules \cite[Appendix B]{John1}.  Thus by Schanuel's lemma all such $\pi_2(X)$ are stably isomorphic.  

This stable class of modules, denoted $\Omega_3(\Z)$,  is discussed extensively in \cite{John1}, in particular in \S29.  For our purposes it is sufficient to recall that $\Omega_3(\Z)$ may be viewed as a directed tree, with modules represented by vertices and partitioned into levels, which are well-ordered.  If $G$ is finite then the level of a module is determined simply by its  $\Z$--rank.  An element of $\Omega_3(\Z)$ is called {\it minimal} if it occurs at the minimal level.

For modules over$\Zd$ one may use a strong cancellation result of Swan \cite[Theorem 6.1]{Swan2} to deduce that  $\pi_2(X) \cong J \oplus \Zd^r$ for  some integer $r$, where $J$ is the  `standard' minimal element of the stable class $\Omega_3(\Z)$ (see (\ref{res})).  

Having classified all possible modules $K$ which may arise as $\pi_2(X)$ in this way, the second step is to show that for each $K$,  any cohomologically 2 dimensional finite cell complex $X$ with $\pi_2(X)\cong K$, is homotopy equivalent to a 2 dimensional cell complex.  This involves realizing the so called `$k$--invariants' \cite[Chapter 6]{John1}.

In \cite{Mann1} the second step is completed for dihedral groups of order $2^n$, and \newline $K\cong J \oplus \Z[D_{2^n}]^r$ for any $r$.  Recently these methods have been extended to the more general  case of all dihedral groups of order $4n$:

\begin{theorem}\label{shea}
{\rm \cite[Theorem 1.3 and Corollary 2.6]{Shea}} Suppose that $X$ is a cohomologically 2 dimensional finite cell complex with fundamental group $D_{4n}$ and $\pi_2(X)\cong J \oplus \ZD^r$, for some integer $r$.  Then $X$ is homotopy equivalent to a 2 dimensional finite cell complex. 
\end{theorem}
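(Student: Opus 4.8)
The plan is to translate the statement into an algebraic comparison of $k$--invariants, and thence into a single surjectivity statement about an automorphism group. First I would invoke the standard translation of the D(2)--problem into algebra (\cite[Appendix B]{John1}, \cite{Mann4}): since $X$ is cohomologically $2$--dimensional it is homotopy equivalent to a finite $3$--complex whose cellular chain complex splits off its top term, so the chain complex of $\tilde X$ is chain homotopy equivalent to an algebraic $2$--complex $E_\ast = (E_2 \to E_1 \to E_0)$ of finitely generated free $\ZD$--modules with $H_0(E_\ast) \cong \Z$, $H_1(E_\ast) = 0$ and $H_2(E_\ast) \cong \pi_2(X) \cong J \oplus \ZD^r$. (That $E_2$ is free, not merely stably free, follows from projective cancellation over $\ZD$, since $\Q[D_{4n}]$ is a product of matrix algebras over number fields.) Moreover, if $Y$ is a finite $2$--complex with $\pi_1(Y) \cong D_{4n}$ and a chain homotopy equivalence $C_\ast(\tilde Y) \simeq E_\ast$ over the augmentations to $\Z$, then $X \simeq Y$: the equivalence is realized by a cellular map $Y \to X$ (in this range of dimensions this realization is exactly what the translation into algebra supplies), and that map induces isomorphisms on $\pi_1$ and on the homology of the universal covers, hence is a homotopy equivalence by the theorems of Hurewicz and Whitehead. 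So it suffices to exhibit one finite $2$--complex $Y$ with $\pi_1(Y) \cong D_{4n}$, $\pi_2(Y) \cong K := J \oplus \ZD^r$ and $C_\ast(\tilde Y) \simeq E_\ast$.

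For the model, take a presentation of $D_{4n}$ whose presentation $2$--complex $Y_0$ has $\pi_2(Y_0) \cong J$ --- for instance the standard $\langle x, y \mid x^{2n}, y^2, (xy)^2 \rangle$, cf.\ (\ref{res}) --- and set $Y = Y_0 \vee \bigvee^{r} S^2$, a finite $2$--complex with $\pi_1(Y) \cong D_{4n}$ and $\pi_2(Y) \cong K$. By the classification of algebraic $2$--complexes with prescribed $\pi_2$ (\cite[Chapter 6]{John1}), those with $\pi_2 \cong K$ are classified up to chain homotopy by the orbits of the natural action of $\mathrm{Aut}_{\ZD}(K)$ on the generators of the cyclic group
\[
H^{3}(D_{4n}; K) \;\cong\; \widehat{H}^{3}(D_{4n}; J) \;\cong\; \widehat{H}^{0}(D_{4n}; \Z) \;\cong\; \Z/4n ,
\]
the first isomorphism holding because $\ZD^{r}$ is cohomologically trivial, the second by dimension shifting in Tate cohomology (as $J$ represents $\Omega_3(\Z)$), and the third because the norm acts on $\Z$ by $4n$. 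The $k$--invariants of both $E_\ast$ and $C_\ast(\tilde Y)$ are generators of $\Z/4n$, i.e.\ elements of $(\Z/4n)^{\times}$; so it is enough to show they lie in one $\mathrm{Aut}_{\ZD}(K)$--orbit, and for that it suffices that
\[
\mathrm{Aut}_{\ZD}\bigl( J \oplus \ZD^{r} \bigr) \;\longrightarrow\; \mathrm{Aut}(\Z/4n) = (\Z/4n)^{\times}
\]
be surjective for the $r$ in hand --- equivalently, that every algebraic $2$--complex with $\pi_2 \cong K$ be geometrically realizable.

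This surjectivity is the crux, and the step I expect to be the main obstacle. Note that, $\ZD^{r}$ being cohomologically trivial, an automorphism of $J \oplus \ZD^{r}$ acts on $\Z/4n$ only through its $J \to J$ corner, yet that corner need not itself be invertible; hence the maps realized on $\Z/4n$ can be strictly more than the image of $\mathrm{Aut}_{\ZD}(J)$, which is precisely how the free summands help and why the hypothesis is $\pi_2(X) \cong J \oplus \ZD^{r}$ (and where a cancellation input over $\ZD$ is used). I would establish the surjectivity through the arithmetic (Milnor) square presentation of $\ZD$ attached to the cyclic normal subgroup $C_{2n} \trianglelefteq D_{4n}$: $\ZD$ is built by pullback from $\Z[C_{2n}]$ with its order--two automorphism --- equivalently from the real cyclotomic rings $\Z[\zeta_d + \zeta_d^{-1}]$ for $d \mid 2n$ --- together with $\Z[C_2 \times C_2]$ (or $\Z[D_{2n}]$) and a common finite quotient, and the third syzygy $J$ decomposes compatibly with this square into pieces that are locally free of rank one away from the singular set. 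Tracing the induced action on $\widehat{H}^{0}(D_{4n}; \Z) = \Z/4n$ through the square, the required units of $\Z/4n$ are produced from units of the corner rings $\Z[\zeta_d + \zeta_d^{-1}]$ (whose reductions generate large subgroups of the relevant $(\Z/m)^{\times}$), from $-1$, and from automorphisms of $J \oplus \ZD^{r}$ with non-invertible $J \to J$ corner (made available by the free summands). This is the step that extends the computation of \cite{Mann1}, where $4n = 2^{m}$, to arbitrary $4n$; the genuinely new ingredient is the arithmetic of $(\Z/4n)^{\times}$ for composite $n$, which I would handle one prime power at a time via the Chinese Remainder Theorem. Once the surjectivity is in place, $\mathrm{Aut}_{\ZD}(K)$ acts transitively on the generators of $\Z/4n$, so $E_\ast \simeq C_\ast(\tilde Y)$; by the first paragraph $X \simeq Y$, and $Y$ is a finite $2$--complex.
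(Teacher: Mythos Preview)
The paper does not prove this theorem: it is quoted verbatim as \cite[Theorem 1.3 and Corollary 2.6]{Shea} and used as input (``This key result is what our proof is predicated on''). So there is no proof in the paper to compare against beyond the citation itself. Your outline is the correct strategy and, as far as one can infer from the citation and from the antecedent \cite{Mann1}, it is the approach taken in \cite{Shea}: reduce to realizing an algebraic $2$--complex, classify those with $\pi_2 \cong J \oplus \ZD^r$ by their $k$--invariants in $(\Z/4n)^\times$, and show that the Swan map $\mathrm{Aut}_{\ZD}(J \oplus \ZD^r) \to (\Z/4n)^\times$ is surjective. Your identification of $\widehat{H}^3(D_{4n};J)\cong\widehat{H}^0(D_{4n};\Z)\cong\Z/4n$ and your observation that the free summands enlarge the image beyond that of $\mathrm{Aut}_{\ZD}(J)$ are both correct and both relevant.

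That said, what you have written is a strategy, not a proof. The entire content of the theorem sits in the surjectivity step, and you only gesture at it (``I would establish the surjectivity through the arithmetic (Milnor) square \ldots''). The passage about producing units of $\Z/4n$ from units of the rings $\Z[\zeta_d+\zeta_d^{-1}]$ and from automorphisms with non-invertible $J\to J$ corner names the right ingredients, but the actual work is to exhibit specific endomorphisms of $J$ (or of $J\oplus\ZD$) hitting each generator of each prime-power factor of $(\Z/4n)^\times$ and to check that they extend to honest automorphisms of $J\oplus\ZD^r$; none of this is automatic, and it is precisely what \cite{Shea} carries out. Your framework is sound and your diagnosis of where the difficulty lies is accurate, but as written the proposal defers exactly the computation that constitutes the theorem.
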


This key result is what our proof is predicated on.  In order to prove that the remaining dihedral groups satisfy the D(2) property, it remains to show that over $\ZD$ any module in $\Omega_3(\Z)$ must be of the form $J \oplus \ZD^r$.  This is our main result.

\bigskip
\noindent {\bf Theorem A} {\sl Over $\ZD$ any module in $\Omega_3(\Z)$ is  of the form $J \oplus \ZD^r$.}

We note that this has been proved for modules  in $\Omega_3(\Z)$ which are geometrically realizable as the second homotopy group of an actual 2--complex \cite{Lati, Hamb}.  However from the point of view of the D(2)--problem, showing the existence of such a 2--complex is a key difficulty, so it is not clear if these methods could be extended to address the D(2)--problem for $D_{4n}$.  

Having proved Theorem A, we may conclude from \cite{John, John1} and Theorem 1.2:

\bigskip
\noindent {\bf Theorem B} {\sl 
All finite dihedral groups satisfy the D(2) property.
}

\noindent {\it Outline of Proof of Theorem A}:  In  \S\ref{minimal} we note that the only modules in $\Omega_3(\Z)$ which may not be of the required form must occur at the minimal level.  We then observe that any such minimal element of $\Omega_3(\Z)$ is the kernel of some surjective map $j\co\ZD^3\to W_2$, where $W_2\subset \ZD^2$ is the image of the `standard' $\partial_2$.

In \S\ref{rest} we consider separately the components $j',j''$ of the map $j$ into the first and second summands of $\ZD^2$.  By Schanuel's Lemma we know that ker$(j'')$ is stably equivalent to ker$(\partial_2'')$, which we denote $M$.  However $M$ contains a free summand, so by the Swan--Jacobinski Theorem we know that ker$(j'')\cong M$.  The kernel of $j$ is just the kernel of the restriction $j'\vert_{{\rm ker}(j'')}$.  The image of this restriction of $j'$ is isomorphic to the module $\ZC$, so we may conclude that any minimal element of $\Omega_3(\Z)$ is in fact the kernel of a surjective homomorphism $M\to \ZC$. 

In \S\ref{isom} we construct isomorphisms between kernels of such maps, using elementary matrices and other means.  In particular we define modules $K_{x,1}$ for integers $x$ and show that any kernel of a surjective homomorphism $M \to \ZC$ is isomorphic to $K_{l,1}$, for $l$ some positive factor of $2n$.

In \S\ref{stabsep} we construct maps ${{\partial_2}^{(l)}}\co \ZD^3 \to \ZD^2$ for $l$ a positive factor of $2n$.   In each case ker$({{\partial_2}^{(l)}})=K_{l,1}$.  Also ${{\partial_2}^{(1)}}={{\partial_2}}$ which has kernel $J$.  It remains to show that the kernels of the remaining ${{\partial_2}^{(l)}}$ are not stably isomorphic to $J$.  

We do this by noting that the kernel of ${{\partial_2}^{(l)}}$ is stably isomorphic to $J$ only if the cokernel of ${{\partial_2}^{(l)}}$ is stably isomorphic to $I$, the augmentation ideal.  As cancellation of free modules holds in the stable class of the augmentation ideal, this is the same as saying that the cokernel of ${{\partial_2}^{(l)}}$ is isomorphic to the augmentation ideal.  

For $l>1$ we show that this is not the case, leaving $J$ as the sole minimal element of $\Omega_3(\Z)$.

\section{Minimal elements of $\Omega_3(\Z)$}\label{minimal}

Fix the presentation $D_{4n}=\langle a,b \vert\,a^{2n}=b^2=1,\,\, aba =b \rangle.$
We work throughout over the integral group ring $\ZD$ and all modules are right modules.  Let $\Sigma$ denote $\sum_{i=0}^{2n-1} a^i\in \ZD$.  Consider the following exact sequence, taken from \cite{Mann1}:
\begin{eqnarray}
0 \to J\longrightarrow \ZD^3 \stackrel{\partial_2}{\longrightarrow} \ZD^2 \stackrel{\partial_1}{\longrightarrow} \ZD \stackrel{\epsilon}{\longrightarrow} \Z \to 0. \label{res}
\end{eqnarray}
Here $\epsilon$ is determined by $\epsilon(1)=1 \in \Z$.   Take basis elements $e_1, e_2\in \ZD^2$ and $E_1, \, E_2, \, E_3 \in \ZD^3$.  Then $\partial_1, \partial_2$ are given by: $$
\begin{array}{cclccccl}
\partial_1 e_1 &=& 1-a,&&&\partial_2 E_1 &=& e_1 \Sigma, \\
\partial_1 e_2 &=&1-b,&&&\partial_2 E_2 &=& e_2 (1+b), \\
&&&&&\partial_2 E_3 &=&  e_1(1+ba)+e_2(a-1),
\end{array}$$
and $J$ is the kernel of $\partial_2$.

With respect to the basis $\{E_1,\, E_2,\,E_3\}$ and $\{e_1, \, e_2\}$,  we have:

\begin{eqnarray}
\partial_2=\left[ \begin{array}{ccc} \Sigma & 0 & 1+ba \\ 0 & 1+b & a-1 \end{array} \right] \label{d2}
\end{eqnarray}

We know that $J$ has minimal $\Z$--rank in $\Omega_3(\Z)$ \cite[Proposition 3.2]{Mann1}.  From the Swan--Jacobinski Theorem \cite[Theorem 15.1]{John1}, we know that the only non-minimal modules in $\Omega_3(\Z)$ are ones of the form $J\oplus \ZD^r$.  It remains to show that up to isomorphism, $J$ is the only minimal element of $\Omega_3(\Z)$.

Let $W_2$ denote the image of $\partial_2$ so we have a short exact sequence:
\begin{eqnarray}0\to J \to \ZD^3 \to W_2\to 0. \label{Jembed}\end{eqnarray}  Let $K$ be a minimal element of $\Omega_3(\Z)$.  That is let $K$ be a module stably equivalent to $J$ and of the same $\Z$--rank.  Thus $K \oplus F \cong J \oplus F'$ for f.g.\!\! free modules $F$ and $F'$ of the same rank.  With respect to this isomorphism, modifying (\ref{Jembed}) by direct summing with the identity map $1_{F'}\co F' \to F'$ yields the short exact sequence:
\begin{eqnarray}0 \to K\oplus F \to \ZD^3 \oplus F' \to W_2\to 0.\label{Kembed}\end{eqnarray}

\begin{lemma} The inclusion in {\rm (\ref{Kembed})}, $\iota\co F \to \ZD^3 \oplus F'$, splits. \end{lemma}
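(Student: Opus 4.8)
The plan is to reduce the statement to the \emph{projectivity} of a single module. Set $Q:=(\ZD^3\oplus F')/\iota(F)$. If $Q$ is projective, then the canonical surjection $\ZD^3\oplus F'\to Q$, whose kernel is $\iota(F)$, splits; a splitting $s\co Q\to\ZD^3\oplus F'$ gives $\ZD^3\oplus F'=\iota(F)\oplus s(Q)$, so $\iota$ splits. It therefore suffices to prove that $Q$ is projective.

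First I would pin down the structure of $Q$. By construction $\iota$ is the composite of the canonical inclusion $F\hookrightarrow K\oplus F$ with the monomorphism of (\ref{Kembed}); hence $\iota(F)$ is contained in the image of $K\oplus F$, and $(K\oplus F)/\iota(F)\cong K$. Dividing (\ref{Kembed}) by $\iota(F)$ therefore produces a short exact sequence $0\to K\to Q\to W_2\to 0$. Now $W_2\subseteq\ZD^2$ is $\Z$--torsion free, and $K$, being a minimal element of $\Omega_3(\Z)$, is a submodule of a free module and hence $\Z$--torsion free as well; consequently $Q$ is a $\ZD$--lattice.

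On the other hand, $0\to F\stackrel{\iota}{\longrightarrow}\ZD^3\oplus F'\longrightarrow Q\to 0$ is a free resolution of $Q$ of length one, so $Q$ has projective dimension at most $1$ over $\ZD$. It now remains to invoke the standard fact that, $D_{4n}$ being finite, a $\ZD$--lattice of finite projective dimension is automatically projective --- e.g.\ by dimension--shifting in Tate cohomology, or because $\mathbb{F}_p[D_{4n}]$ is self--injective for every prime $p$. Applying this to $Q$ completes the argument.

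The one point requiring care --- and, besides the quoted homological fact, the only non--formal ingredient --- is that $Q$ genuinely is $\Z$--torsion free. This hypothesis cannot be dropped: over $\ZD$ there exist non--projective modules of finite projective dimension, so the conclusion fails for non--lattices. Granting torsion--freeness, the rest is bookkeeping, and in particular the argument does not even use that $F$ and $F'$ have equal rank.
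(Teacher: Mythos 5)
Your proof is correct, and it takes a genuinely different route from the paper. The paper splits the sequence from the \emph{left}: it observes that the cokernel $Q$ is a finitely generated $\Z$--torsion--free $\ZD$--module, and then cites Johnson's framework of tame classes to conclude that $F$, being projective, is \emph{relatively injective} with respect to that class (property $\mathcal{T}3$ in \cite[p.70]{John1}), so the inclusion of $F$ splits. You split the sequence from the \emph{right}: you show the cokernel $Q$ itself is projective, using the Rim--Nakayama theorem that a $\Z[G]$--lattice (for $G$ finite) of finite projective dimension is cohomologically trivial and hence projective. Your verification that $Q$ is a lattice --- fitting it into $0\to K\to Q\to W_2\to 0$ and noting both ends are torsion free --- is correct, as is the observation that the presentation $0\to F\to\ZD^3\oplus F'\to Q\to0$ bounds the projective dimension by $1$. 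The two arguments are essentially dual manifestations of the same underlying phenomenon (over a finite group, short exact sequences of lattices with a projective end split), but yours appeals to classical machinery available in standard references (Serre's \emph{Local Fields} IX.5, or Brown's \emph{Cohomology of Groups} VI.8) rather than to the bespoke tame--class apparatus of \cite{John1}, and so is arguably more self--contained for a reader not already immersed in that book. You are also right that the equal--rank hypothesis on $F,F'$ plays no role in this lemma; the paper uses it only afterwards, to identify the free complement with $\ZD^3$.
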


\begin{proof}
The cokernel of $\iota$ is torsion free and finitely generated.  To deduce that $\iota$ splits, it is sufficient to note that $F$ is injective relative to the tame class of f.g.\!\! torsion free $\ZD$ modules (see \cite[p.69]{John1} for a discussion of relatively injective modules and tame classes).  This follows from $F$ being projective and property $\mathcal{T}3$ in \cite[p.70]{John1}.  
\end{proof}

Thus  $\ZD^3 \oplus F' \cong S\oplus F$, for some stably free module $S$.  As $\ZD$ satisfies the Eichler property, any
stably free module over it must in fact be free \cite[Proposition 15.7]{John1}).  As $F, F'$ have the same $\Z$--rank, we know that $S\cong \ZD^3$.  Hence we may write (\ref{Kembed}) as:
$$0 \to K \oplus F \stackrel{i'}{\longrightarrow} \ZD^3 \oplus F {\longrightarrow} W_2 \to 0,$$
where $i'$ restricts to the identity on $F$.  Restricting to submodules of this sequence, we get the exact sequence of modules:

$$0 \to K \stackrel{i}{\longrightarrow} \ZD^3  \stackrel{j}{\longrightarrow} W_2 \to 0.$$

We may conclude:

\begin{lemma} \label{kernel}
Any minimal element of $\Omega_3(\Z)$ occurs as the kernel of some surjective map $\ZD^3  \stackrel{j}{\longrightarrow} W_2$.
\label{minisker}\end{lemma}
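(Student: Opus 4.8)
The plan is to compare a minimal $K$ with the standard resolution (\ref{res}) using Schanuel's lemma, and then to remove the auxiliary free summands that this comparison introduces. First I would unpack minimality: a minimal $K\in\Omega_3(\Z)$ is by definition stably equivalent to $J$ and of the same $\Z$--rank, so $K\oplus F\cong J\oplus F'$ for f.g.\ free modules $F,F'$, necessarily of equal rank once one compares $\Z$--ranks. Direct summing the short exact sequence $0\to J\to \ZD^3\to W_2\to 0$ of (\ref{Jembed}) with the identity on $F'$ and transporting along this isomorphism then produces $0\to K\oplus F\to \ZD^3\oplus F'\to W_2\to 0$, that is (\ref{Kembed}).

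Next I would strip the summand $F$ out of the middle term. The composite inclusion $\iota\co F\hookrightarrow K\oplus F\hookrightarrow \ZD^3\oplus F'$ has cokernel sitting in an extension $0\to K\to \mathrm{coker}(\iota)\to W_2\to 0$; both $K$ (a summand of the torsion-free module $J\oplus F'$) and $W_2$ (a submodule of $\ZD^2$) are f.g.\ torsion free, hence so is $\mathrm{coker}(\iota)$. Since a free $\ZD$--module is relatively injective with respect to the tame class of f.g.\ torsion-free modules --- property $\mathcal{T}3$ of \cite[p.70]{John1} --- the identity on $F$ extends over $\iota$, giving a retraction and hence a splitting $\ZD^3\oplus F'\cong S\oplus F$. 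The complement $S$ is stably free, so it is free by the Eichler property of $\ZD$ \cite[Proposition 15.7]{John1}, and a $\Z$--rank count (using $\rk F=\rk F'$) forces $S\cong \ZD^3$.

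Finally I would rewrite (\ref{Kembed}) as $0\to K\oplus F\stackrel{i'}{\to}\ZD^3\oplus F\to W_2\to 0$ with $i'$ restricting to the identity on $F$, and then restrict to the complementary direct summands: the kernel restricts to $K$, the middle term to $\ZD^3$, and $W_2$ is left unchanged. This yields $0\to K\stackrel{i}{\to}\ZD^3\stackrel{j}{\to}W_2\to 0$, exhibiting $K$ as $\ker(j)$ for a surjection $j\co\ZD^3\to W_2$, as required.

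I expect the main obstacle to be the splitting step in the second paragraph: one must check that the relevant cokernel genuinely lands in the tame class of f.g.\ torsion-free modules so that relative injectivity of free modules can be applied, and one must then track $\Z$--ranks carefully so that the stably free complement $S$ is not merely free but free of the correct rank $3$. The remaining manipulations are routine bookkeeping with Schanuel's lemma and direct sums.
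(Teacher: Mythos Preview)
Your proposal is correct and follows essentially the same route as the paper: obtain (\ref{Kembed}) from (\ref{Jembed}) via Schanuel, split off $F$ using relative injectivity of free modules in the tame class (your exhibition of the extension $0\to K\to\mathrm{coker}(\iota)\to W_2\to 0$ is exactly the check the paper suppresses), invoke Eichler to identify the stably free complement as $\ZD^3$, and then restrict. The only cosmetic difference is that you spell out why $\mathrm{coker}(\iota)$ is torsion free, whereas the paper simply asserts it.
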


Thus in order to prove our main result (Theorem A), it suffices to show that any kernel of a surjective map $\ZD^3  \stackrel{j}{\longrightarrow} W_2$, is isomorphic to $J$.

\section{Restricting to submodules} \label{rest}

In this section we take an arbitrary surjection $\ZD^3  \stackrel{j}{\longrightarrow} W_2$, and find submodules of $\ZD^3$ and $W_2$ such that $j$ restricts to a surjection between these submodules, with the same kernel as $j$.  In the next section we classify all possible kernels of surjective maps between these submodules.  Thus we will have a comprehensive list of all modules which may be a minimal  element of $\Omega_3(\Z)$.  In the final section we will show that only one of these modules is in $\Omega_3(\Z)$.

\bigskip
Recall from (\ref{d2}) the map $\partial_2\co \ZD^3\to \ZD^2$: \begin{eqnarray*}
\partial_2=\left[ \begin{array}{ccc} \Sigma & 0 & 1+ba \\ 0 & 1+b & a-1 \end{array} \right] 
\end{eqnarray*}
Let  $\partial_2',\partial_2''$ denote the components of $\partial_2$, onto summands of $\ZD^2$ generated by $e_1, e_2$ respectively.  Let $M \subset \ZD^3$ denote the kernel of $\partial_2''$.

\begin{lemma}
The module $M$ is generated by{\rm:}
\begin{eqnarray*}
w_1&=&E_1,\\
w_2&=&E_2(1-b),\\
w_3&=&E_3\Sigma,\\
w_4&=&E_2(a-1)-E_3(1-ba).
\end{eqnarray*}\label{doubleus}
\end{lemma}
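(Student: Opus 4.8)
The plan is to verify directly that each $w_i$ lies in $M = \ker(\partial_2'')$, and then show that these four elements in fact generate the whole kernel. The first part is a routine check: $\partial_2'' E_1 = 0$, $\partial_2'' E_2 = e_2(1+b)$, $\partial_2'' E_3 = e_2(a-1)$, so $\partial_2''(w_1) = 0$ trivially, $\partial_2''(w_2) = e_2(1+b)(1-b) = e_2(1 - b^2) = 0$ since $b^2 = 1$, $\partial_2''(w_3) = e_2(a-1)\Sigma = 0$ since $(a-1)\Sigma = 0$ in $\ZD$, and $\partial_2''(w_4) = e_2(1+b)(a-1) - e_2(a-1)(1-ba)$; one expands and uses the relation $aba = b$ (equivalently $ba = a^{-1}b$ and $ab = ba^{-1}$) to see the two terms cancel. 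So $\langle w_1, w_2, w_3, w_4 \rangle \subseteq M$.

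For the reverse inclusion, I would take a general element $x = E_1 \alpha + E_2 \beta + E_3 \gamma \in \ZD^3$ with $\partial_2'' x = e_2\big((1+b)\beta + (a-1)\gamma\big) = 0$, i.e. $(1+b)\beta + (a-1)\gamma = 0$ in $\ZD$, and show $x$ is a $\ZD$-combination of the $w_i$. Since $w_1 = E_1$ handles the $E_1$-coordinate freely, the real content is: whenever $(1+b)\beta = -(a-1)\gamma = (1-a)\gamma$, the element $E_2\beta + E_3\gamma$ lies in $\langle w_2, w_3, w_4\rangle$. The key algebraic step is to understand the submodule of $\ZD^2$ (coordinates $\beta,\gamma$) cut out by the single relation $(1+b)\beta + (a-1)\gamma = 0$ — equivalently, the kernel of the map $\ZD^2 \to \ZD$ sending $(\beta,\gamma) \mapsto (1+b)\beta + (a-1)\gamma$. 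I expect the relations $w_2 = (1-b)$ in the $\beta$-slot, $w_3 = \Sigma$ in the $\gamma$-slot, and $w_4$ coupling the two, to be exactly a generating set, and this should be provable by a short syzygy computation using the structure of $\ZD$ as built from $\Z[C_{2n}]$ by adjoining $b$ with $b^2 = 1$: decompose $\beta, \gamma$ according to the $\pm 1$ eigenspaces of left multiplication by $b$, reduce the relation $(1+b)\beta = (1-a)\gamma$ componentwise, and solve.

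The main obstacle will be this syzygy computation — showing the stated four elements suffice rather than needing more generators. Concretely one must check that if $(1+b)\beta = (1-a)\gamma$ then, after subtracting off suitable multiples of $w_2$ and $w_3$ to normalize $\beta$ and $\gamma$, what remains is a multiple of $w_4$. The relation $(1-a)\Sigma = 0$ and $(1+b)(1-b) = 0$ mean $w_2, w_3$ let us adjust $\beta$ modulo $(1-b)\ZD$ and $\gamma$ modulo $\Sigma\ZD$; one then checks that the residual relation forces the adjusted $E_2\beta + E_3\gamma$ to be a $\ZD$-multiple of $w_4 = E_2(a-1) - E_3(1-ba)$, using that $\mathrm{ann}_{\ZD}(1-a) = \Sigma\ZD$ and $\mathrm{ann}_{\ZD}(1+b) = (1-b)\ZD$ (right annihilators). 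This is the step requiring genuine care with the noncommutative arithmetic of $\ZD$; the rest is bookkeeping.
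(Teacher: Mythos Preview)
Your approach is sound but takes a different route from the paper. The paper exploits a slick shortcut: the ring automorphism of $\ZD$ sending $a\mapsto a$, $b\mapsto -b$ transforms the known exact sequence~(\ref{res}) into a new exact sequence in which the map $\ZD^2\to\ZD$ becomes $(e_1,e_2)\mapsto e_1(1-a)+e_2(1+b)$. Exactness at $\ZD^2$ then says that the kernel of this map---precisely the module of relations between $1+b$ and $a-1$ that you are trying to compute---is generated by the transformed columns of $\partial_2$, which are exactly (up to relabelling and signs) $w_2,w_3,w_4$. Thus the paper gets the syzygy computation for free from the already-established exactness of~(\ref{res}), whereas you propose to redo that computation from scratch via annihilators and eigenspace decomposition under $b$. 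Your method would work (your annihilator identifications $\mathrm{ann}(1+b)=(1-b)\ZD$ and $\mathrm{ann}(a-1)=\Sigma\ZD$ are correct), and has the virtue of being self-contained, but it duplicates effort: the hard step you flag as ``requiring genuine care with the noncommutative arithmetic'' is exactly what the exactness of~(\ref{res}) already encodes.
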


\begin{proof}
Consider the sequence (\ref{res}) and apply the automorphism of $\ZD$ given by $a \mapsto a,\,b\mapsto -b$ to the elements of the matrices representing the maps $\partial_1,\partial_2$.  The new sequence is still exact, so we see that $w_2,w_3,w_4$ generate all relations between $1+b$ and $a-1$.
\end{proof}

Note that applying $\partial_2'$ to the elements $w_1,w_2,w_3,w_4$ results in elements of $\Sigma\ZD$.  In particular $\partial_2'(w_1)=\Sigma$.  We may conclude:

\begin{lemma} The elements of $W_2$ which have second component 0, are generated by{\rm:} $$\left(\begin{array}{c}\Sigma\\0\end{array}\right).$$ \label{firstcomp}
\end{lemma}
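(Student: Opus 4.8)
The plan is to show directly that every element of $W_2$ with second component $0$ is a $\ZD$-multiple of $(\Sigma,0)^T$. By definition $W_2 = \mathrm{im}(\partial_2)$, so an arbitrary element of $W_2$ has the form $\partial_2(v)$ for some $v \in \ZD^3$. Its second component is exactly $\partial_2''(v)$, so the elements of $W_2$ with second component $0$ are precisely $\{\partial_2'(v) : v \in \ker(\partial_2'')\} = \partial_2'(M)$, sitting inside the first summand of $\ZD^2$. Hence the statement reduces to computing the image $\partial_2'(M)$ and checking it equals $\Sigma\ZD$ — which as a submodule of the first summand corresponds to the cyclic module generated by $(\Sigma,0)^T$.

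First I would invoke Lemma~\ref{doubleus}, which gives explicit generators $w_1, w_2, w_3, w_4$ of $M$. Since $\partial_2'$ is $\ZD$-linear, $\partial_2'(M)$ is generated as a $\ZD$-module by $\partial_2'(w_1), \partial_2'(w_2), \partial_2'(w_3), \partial_2'(w_4)$. Reading off the first row of the matrix~(\ref{d2}), namely $\partial_2' E_1 = \Sigma$, $\partial_2' E_2 = 0$, $\partial_2' E_3 = 1+ba$, I would compute each of these four images. The note immediately preceding the statement already records that all four lie in $\Sigma\ZD$; the point to verify is that together they \emph{generate} $\Sigma\ZD$, and this is immediate from $\partial_2'(w_1) = \partial_2'(E_1) = \Sigma$. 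So $\Sigma\ZD \subseteq \partial_2'(M) \subseteq \Sigma\ZD$, giving equality.

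The only genuine content — and the step I would treat most carefully — is the containment $\partial_2'(M) \subseteq \Sigma\ZD$, i.e.\ that $\partial_2'(w_2), \partial_2'(w_3), \partial_2'(w_4)$ all land in $\Sigma\ZD$. For $w_3 = E_3\Sigma$ this is clear since $\partial_2'(w_3) = (1+ba)\Sigma$ and $\Sigma a^i = \Sigma$ for all $i$ forces this into $\Z\Sigma \subseteq \Sigma\ZD$. For $w_2 = E_2(1-b)$ we get $\partial_2'(w_2) = 0 \in \Sigma\ZD$ trivially. The remaining case $w_4 = E_2(a-1) - E_3(1-ba)$ gives $\partial_2'(w_4) = -(1+ba)(1-ba) = -(1 - (ba)^2)$; since $(ba)^2 = ba\cdot ba$ and $ab a = b$ forces $ba$ to have order $2n$ (it is a rotation in $D_{4n}$), one computes $(ba)^2 = a^{-2}$ or similar, and the point is that $1 - (ba)^2$ is annihilated by... actually the cleaner route is to observe $1 - (ba)^2$ lies in the augmentation ideal of the cyclic subgroup generated by $ba$, so it need not obviously be in $\Sigma\ZD$; I would instead verify this containment by a short direct calculation in $\ZD$, or — more robustly — by the argument in the proof of Lemma~\ref{doubleus}: applying the automorphism $a\mapsto a$, $b\mapsto -b$ to~(\ref{res}) shows the image of $M$ under the twisted $\partial_2'$ is the twisted image of $\partial_2'$ on all of $\ker(\partial_2'')$, and exactness of the twisted sequence pins it down. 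I expect the brace-counting here to be the whole difficulty: once the four images are correctly simplified using the relations $\Sigma a = \Sigma$, $b\Sigma = \Sigma b$, and $\Sigma(1+b)$ has a predictable form, the inclusion in both directions is a one-line verification, so the lemma follows.
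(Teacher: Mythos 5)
Your overall strategy is exactly the paper's: reduce to showing $\partial_2'(M)=\Sigma\ZD$ using the generators $w_1,\dots,w_4$ of $M$, with $\partial_2'(w_1)=\Sigma$ giving one inclusion and the other three images needing to land in $\Sigma\ZD$ for the other. The paper disposes of this last verification with the single remark that $\partial_2'(w_i)\in\Sigma\ZD$ for all $i$, treating it as a direct computation. However, your execution of that computation contains a genuine error precisely at the step you flag as the difficulty.

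For $w_4=E_2(a-1)-E_3(1-ba)$ you correctly get $\partial_2'(w_4)=-(1+ba)(1-ba)=-(1-(ba)^2)$, but then assert that $ba$ ``has order $2n$ (it is a rotation in $D_{4n}$).'' This is false. In $D_{4n}=\langle a,b\mid a^{2n}=b^2=1,\ aba=b\rangle$, the element $a$ is the rotation; $b$ is a reflection; and $ba$, being a reflection composed with a rotation, is again a reflection of order $2$. Concretely, from $ab=ba^{-1}$ one gets $(ba)^2=b(ab)a=b(ba^{-1})a=b^2a^{-1}a=1$, hence $\partial_2'(w_4)=-(1-1)=0\in\Sigma\ZD$. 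The proposed fallback via the twist $a\mapsto a$, $b\mapsto -b$ would not obviously rescue you either: that twist was used in Lemma~\ref{doubleus} to identify the generators of the \emph{kernel} of $\partial_2''$, and does not directly control where $\partial_2'$ sends $M$. There is also a smaller slip in the $w_3$ case: $(1+ba)\Sigma=\Sigma+b\Sigma=\Sigma(1+b)$, which lies in $\Sigma\ZD$ but not in $\Z\Sigma$ as you claim (a $b$ appears, so it is not an integer multiple of $\Sigma$). So the approach is right and matches the paper, but the verification is left genuinely incomplete at $w_4$ and is based on a misidentification of the order of $ba$.
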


\begin{proof}
An element of $\ZD^2$ with second component $0$ lies in $W_2$ if and only if it is in the  image of $M$ under $\partial_2$.
\end{proof}

At this point we note that $D_{4n}$ has a normal subgroup generated by $a$ and taking the quotient by this normal subgroup gives a surjective group homomorphism $D_{4n}\to C_2$.  We abuse notation by using elements of $D_{4n}$ to refer to their images in $C_2$. With respect to this homomorphism, we may regard any $\ZC$ module $L$ as a $\ZD$ module by implicitly identifying $L$ with the tensor product $L \otimes_{\ZC}\ZD$.  Thus we have that as $\ZD$ modules, $\Sigma \ZD \cong \ZC$.

Now consider an arbitrary surjective homomorphism $j\co \ZD^3 \to W_2$.  Again let $j',j''$ denote the first and second components of $j$.  Let $I^T$ denote the submodule of $\ZD$ generated by $a-1,b+1$.

\begin{lemma}
The kernel of $j''$ is isomorphic to $M$. \label{Misker}
\end{lemma}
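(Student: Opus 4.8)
The plan is to compare $j''$ with the corresponding component $\partial_2''$ of the standard $\partial_2$: both are surjections from $\ZD^3$ onto the same submodule of $\ZD$, so Schanuel's lemma makes their kernels stably isomorphic, after which a free direct summand in $M$ lets us upgrade this to an honest isomorphism by cancellation.

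First I would identify the image of $j''$. Since $j$ surjects onto $W_2$, the image of $\partial_2$, and $j''$ is $j$ post-composed with the projection onto the $e_2$--summand of $\ZD^2$, the image of $j''$ consists exactly of the second components of the elements of $W_2$, which is the image of $\partial_2''$; by (\ref{d2}) this is the submodule of $\ZD$ generated by $1+b$ and $a-1$, that is, $I^T$. Hence $j''$ and $\partial_2''$ are both surjections $\ZD^3\to I^T$, with kernels $\ker(j'')$ and $M$ respectively, and applying Schanuel's lemma to the short exact sequences $0\to M\to\ZD^3\to I^T\to 0$ and $0\to\ker(j'')\to\ZD^3\to I^T\to 0$ yields $\ker(j'')\oplus\ZD^3\cong M\oplus\ZD^3$. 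Thus $\ker(j'')$ is stably isomorphic to $M$.

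It remains to promote this stable isomorphism to an isomorphism, and for that I would exhibit a free direct summand of $M$. By Lemma \ref{doubleus} the module $M$ is generated by $w_1=E_1$ together with $w_2,w_3,w_4$, each of the latter having zero first coordinate relative to the basis $E_1,E_2,E_3$ of $\ZD^3$; so the projection of $\ZD^3$ onto its $E_1$--summand restricts to a surjection $M\to\ZD$ that is split by $1\mapsto w_1$, giving $M\cong\ZD\oplus M'$ for some lattice $M'$. Since $\ZD$ satisfies the Eichler property, the Swan--Jacobinski theorem now permits cancellation of the free module $\ZD^3$ from $\ker(j'')\oplus\ZD^3\cong M\oplus\ZD^3$ --- this is precisely where the free summand of $M$ is used --- whence $\ker(j'')\cong M$. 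The one delicate point is this final cancellation: a stable isomorphism alone would not suffice, so the argument really hinges on reading a genuine free summand off the explicit generators of $M$ in Lemma \ref{doubleus}; the rest is routine bookkeeping with images and kernels.
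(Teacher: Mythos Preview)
Your proof is correct and follows essentially the same route as the paper: both identify the image of $j''$ as $I^T$, apply Schanuel's lemma to the two short exact sequences to get a stable isomorphism, and then invoke the Swan--Jacobinski theorem using the free summand of $M$ generated by $w_1=E_1$ to cancel. Your version simply supplies more detail on why $j''$ has image $I^T$ and why $w_1$ splits off as a free summand, but the argument is the same.
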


\begin{proof}
We have exact sequences:
\begin{eqnarray*}
0 \to M \to \ZD^3 \stackrel {\partial_2''} \to I^T\to 0,\label{firstker}\\
0 \to {\rm ker}(j'') \to \ZD^3 \stackrel {j''} \to I^T\to 0.\label{secker}
\end{eqnarray*}

Thus by Schanuel's Lemma ker$(j'')$ is stably equivalent to $M$.  Clearly they have the same $\Z$--rank.  Note that $M$ contains a free summand generated by $w_1$, so we may apply  the Swan--Jacobinski Theorem \cite[Theorem 15.1]{John1} to deduce ker$(j'')\cong M$.
\end{proof}

\begin{lemma}
Any minimal element of $\Omega_3(\Z)$ is isomorphic to the kernel of a surjective homomorphism $M \to \ZC$. \label{MtoZC2}
\end{lemma}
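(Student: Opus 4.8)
The plan is to run the restriction argument sketched in the introduction, feeding in Lemmas~\ref{kernel}, \ref{firstcomp} and \ref{Misker}. Let $K$ be a minimal element of $\Omega_3(\Z)$. By Lemma~\ref{kernel} we may realise $K$ as $\ker(j)$ for some surjection $j\co\ZD^3\to W_2$, and, as in the paragraph preceding Lemma~\ref{Misker}, we write $j=(j',j'')$ for the two components of $j$ into the summands of $\ZD^2$ spanned by $e_1$ and $e_2$. Since $e_1,e_2$ form a basis, $j(x)=0$ if and only if $j'(x)=0$ and $j''(x)=0$, so $\ker(j)$ is precisely the kernel of the restricted map $j'\vert_{\ker(j'')}$. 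By Lemma~\ref{Misker} there is an isomorphism $M\to\ker(j'')$, so composing it with $j'\vert_{\ker(j'')}$ gives a homomorphism $M\to W_2$ whose kernel is isomorphic to $K$; it remains to show that its image is a copy of $\ZC$.

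The key step is to locate the image of $j'\vert_{\ker(j'')}$. If $x\in\ker(j'')$ then $j(x)=j'(x)e_1$ has second component $0$, so by Lemma~\ref{firstcomp} it lies in the cyclic submodule $N\subset W_2$ generated by $e_1\Sigma=\partial_2(w_1)$. Conversely, since $j$ is surjective and $e_1\Sigma\in W_2$, there is $x_0\in\ZD^3$ with $j(x_0)=e_1\Sigma$; then $j''(x_0)=0$, so $x_0\in\ker(j'')$, and hence $e_1\Sigma$ itself lies in the image of $j'\vert_{\ker(j'')}$. As $e_1\Sigma$ generates $N$ as a $\ZD$--module, $j'\vert_{\ker(j'')}$ maps $\ker(j'')$ onto $N$. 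Finally $N\cong\Sigma\ZD\cong\ZC$, the last isomorphism being the one recalled before Lemma~\ref{Misker} coming from the quotient map $D_{4n}\to C_2$. Transporting everything along the isomorphism $M\cong\ker(j'')$ exhibits $K$ as the kernel of a surjective homomorphism $M\to\ZC$, which is the assertion.

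I do not anticipate a genuine obstacle in proving this lemma: the argument is essentially bookkeeping on top of the three cited lemmas. The only point that deserves care is pinning down the image of $j'\vert_{\ker(j'')}$ \emph{exactly} --- that is, checking both the inclusion into $N$ (via Lemma~\ref{firstcomp}) and the surjectivity onto $N$ (via surjectivity of $j$ together with the fact that $e_1\Sigma$ is a single-element generator of $N$), rather than settling for a stable or merely abstract identification. The substantive work is deferred: in \S\ref{isom} one must classify the kernels of the surjections $M\to\ZC$ produced here, and in \S\ref{stabsep} one must determine which of those kernels actually lie in $\Omega_3(\Z)$.
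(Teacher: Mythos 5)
Your argument is correct and follows the same route as the paper: use Lemma~\ref{kernel} to realise the minimal element as $\ker(j)$, observe that $\ker(j)=\ker(j'\vert_{\ker(j'')})$, identify $\ker(j'')$ with $M$ via Lemma~\ref{Misker}, and use Lemma~\ref{firstcomp} to pin down the image as $\Sigma\ZD\cong\ZC$. Your write-up is simply more explicit than the paper's about why the image is \emph{all} of $\Sigma\ZD$ (using surjectivity of $j$), which the paper leaves implicit.
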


\begin{proof}
From Lemma \ref{minisker} we know that any minimal element of $\Omega_3(\Z)$ is isomorphic to the kernel of some surjective homomorphism $j\co \ZD^3\to W_2$.  This is the kernel of $j'\vert_{{\rm ker}j''}$. By Lemma \ref{Misker} we may identify ker$(j'')$ with $M$ and by Lemma \ref{firstcomp} the image of  $j'\vert_{{\rm ker}j''}$ is $\Sigma\ZD \cong \ZC$.
\end{proof}

The action of $a$ on $\ZC$ is trivial so the kernel of any surjective map 
$s\co M \to \ZC$, must contain $M(a-1)$.  Thus $s$ is just the quotient map $M\to M/M(a-1)$ composed with a $\ZC$--linear map $M/M(a-1)\to \ZC$.  We seek to better understand the $\ZC$ module $M/M(a-1)$.

Let $\Z^T$ denote the $\ZD$ module whose underlying abelian group
is isomorphic to the integers, and on which $a$ acts trivially and $b$ acts as
multiplication by $-1$.  Let $\Q^T$ denote $\Z^T \otimes \Q$.

\begin{lemma}{$M/M(a-1)$ has (torsion free) $\Z$--rank 5.}\label{rank5}
\end{lemma}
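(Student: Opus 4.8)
The plan is to compute the $\Z$--rank of $M/M(a-1)$ directly from the generating set $\{w_1,w_2,w_3,w_4\}$ of Lemma \ref{doubleus}. Since $M \subset \ZD^3$ and $M$ is $\Z$--free, it suffices to understand the image of $M$ in $\ZD^3/\ZD^3(a-1) \cong \ZC^3$ (the last isomorphism because quotienting by the action of $a$ collapses $\ZD$ to $\ZC$), and then quotient that image by the $C_2$--action of $b$... no: $M/M(a-1)$ is the image of $M$ in $\ZD^3 \otimes_{\ZD} \ZC = \ZC^3$, which still carries a $C_2$--action, and we want its $\Z$--rank, i.e. $2$ times its $\ZC$--rank is wrong in general since $\ZC$ is not $\Z$--free of rank matching — rather $\ZC \cong \Z^2$ as abelian groups, so the $\Z$--rank of $M/M(a-1)$ equals the $\Z$--rank of the image of $M$ in $\ZC^3$. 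So the task reduces to: find the abelian subgroup of $\ZC^3$ generated by the four images $\bar w_1,\bar w_2,\bar w_3,\bar w_4$ together with its $b$--translates, and compute its rank.

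First I would reduce each $w_i$ modulo $(a-1)$. Writing $\bar x$ for the image of $x \in \ZD$ in $\ZC$, we have $\bar\Sigma = 2n \cdot 1$ (the sum of $2n$ copies of the image of $a$, which is $1$), $\overline{1-b} = 1 - b$, $\overline{a-1} = 0$, and $\overline{1-ba} = 1 - b$. Hence $\bar w_1 = E_1$, $\bar w_2 = E_2(1-b)$, $\bar w_3 = 2n\, E_3$, and $\bar w_4 = -E_3(1-b)$, all viewed in $\ZC^3$ with basis $E_1,E_2,E_3$. The subgroup these generate as a $\ZC$--module (equivalently, adjoining $b$--translates) is then: the full $\ZC$ in the $E_1$--slot (rank $2$); the ideal $(1-b)\ZC \cong \Z$ in the $E_2$--slot (rank $1$); and in the $E_3$--slot the subgroup generated by $2n$ and $(1-b)$, together with $b\cdot 2n = 2n$ and $b(1-b) = -(1-b)$, i.e. the subgroup $2n\Z \oplus (1-b)\Z$ inside $\ZC \cong \Z \oplus \Z$ (with $\Z$-basis $1, b$), which has rank $2$. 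Summing gives $2 + 1 + 2 = 5$.

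The one genuine subtlety, and the step I would write most carefully, is checking that $M/M(a-1)$ really is the image of $M$ in $\ZC^3$ — that is, that $M \cap \ZD^3(a-1) = M(a-1)$ — so that no extra collapsing occurs beyond what the generators show. This should follow by a short diagram chase from the defining sequence $0 \to M \to \ZD^3 \xrightarrow{\partial_2''} I^T \to 0$: tensoring with $\ZC$ over $\ZD$ (i.e. applying $- \otimes_{\ZD}\Z^T$ appropriately, or just $-/-(a-1)$) is right exact, so $M/M(a-1) \to \ZD^3/\ZD^3(a-1) \to I^T/I^T(a-1) \to 0$ is exact; one then identifies $I^T/I^T(a-1)$ and notes the map $M/M(a-1) \to \ZC^3$ has kernel exactly $\mathrm{Tor}^{\ZD}_1(I^T, \Z^T)$, which is $\Z$--torsion (being a torsion group as $I^T$ has finite projective dimension issues aside), hence does not affect the $\Z$--rank. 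In fact for the rank computation alone one may bypass this entirely: $\mathrm{rk}_\Z(M/M(a-1)) = \mathrm{rk}_\Z(M) - \mathrm{rk}_\Z(M(a-1))$, and $\mathrm{rk}_\Z(M(a-1)) = \mathrm{rk}_\Z(M) - \dim_\Q(M\otimes\Q)^{a=1}$, reducing everything to a rational computation: $M \otimes \Q$ sits in $0 \to M\otimes\Q \to \QD^3 \to (I^T\otimes\Q) \to 0$, and one computes the $a$--invariants by passing to the quotient ring $\QD/(a-1) \cong \QC$. The expected main obstacle is bookkeeping the $b$--action in the $E_3$--coordinate correctly, since $\bar w_3$ and $\bar w_4$ interact there; everything else is routine linear algebra over $\Z$.
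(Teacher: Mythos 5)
Your computation is correct, but it takes a genuinely different route from the paper. The paper's proof never looks at the generators $w_1,\dots,w_4$ at all: it tensors the four-term sequence $0 \to M \to \ZD^3 \xrightarrow{\partial_2''} \ZD \to \Z^T \to 0$ with $\Q$, applies Whitehead's trick (the rational Schanuel argument) to conclude $M\otimes\Q \oplus \QD \cong \QD^3 \oplus \Q^T$, cancels to get $M\otimes\Q\cong\QD^2\oplus\Q^T$, and then passes to $a$--coinvariants to read off $\QC^2\oplus\Q^T$, of dimension $5$. You instead reduce the explicit generators of Lemma~\ref{doubleus} modulo $(a-1)$ and compute the $\Z$--rank of the resulting subgroup of $\ZC^3$ slot by slot, which does give $5$; the price is the extra care you correctly flag about whether $M/M(a-1)$ maps injectively (mod torsion) onto the image of $M$ in $\ZC^3$, which you resolve via the Tor term $\operatorname{Tor}_1^{\ZD}(I^T,\ZC)$ being $\Z$--torsion since $\QD$ is semisimple. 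That Tor argument is sound, though you should phrase the functor as $-\otimes_{\ZD}\ZC$ rather than $-\otimes_{\ZD}\Z^T$ (these differ, and only the former computes $M/M(a-1)$); your parenthetical shows you are aware of this. Your ``bypass'' paragraph is essentially the paper's route in disguise (three-term rather than four-term sequence, invariants rather than coinvariants, same semisimplicity input). On balance, the paper's method is slicker because working over $\Q$ from the outset makes the Tor subtlety invisible; your direct method is more concrete and has the side benefit of exhibiting the $\Z$--structure of $M/M(a-1)$, which is exactly what Lemma~\ref{baction} establishes next, so your computation is effectively an advance on the proof of that lemma rather than of this one.
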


  \begin{proof} We have an exact sequence:
$$0 \to M {\longrightarrow} \ZD^3 \stackrel{\partial_2''}{\longrightarrow}
 \ZD \to \Z^T \to 0.$$
Tensor this sequence with $\Q$ and 
 apply "Whitehead's trick" to get:
$$M \otimes \Q \oplus \QD\cong \QD^3 \oplus \Q^T.$$  Canceling we get $M \otimes \Q \cong \QD^2 \oplus \Q^T$ and 
$$(M/M(a-1))\otimes \Q \cong \QC^2\oplus\Q^T.$$ 
This has $\Q$--rank 5.
 \end{proof}

If $m_1,m_2\in M$ differ by an element of $M(a-1)$ we write $m_1 \sim m_2$.

\begin{lemma} We have an isomorphism of $\ZD$ modules{\rm:} $$M/M(a-1)\cong \ZC \oplus\Z^T\oplus\Z^T\oplus \Z.$$  The summands of $\ZC,\Z^T, \Z^T,\Z$ are generated respectively by the images of{\rm:} $$w_1,w_2,w_4,w_3+w_4n.$$ \label{baction}
\end{lemma}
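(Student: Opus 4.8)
The plan is to peel off the $\ZC$ summand at once and then handle a rank $3$ quotient directly. Since $\partial_2''$ kills $E_1$, we have $M = E_1\ZD \oplus M'$, where $M' \subset E_2\ZD \oplus E_3\ZD$ is the submodule of relations between $1+b$ and $a-1$, generated by $w_2,w_3,w_4$ (see the proof of Lemma \ref{doubleus}). Because $a-1$ generates the kernel of the ring surjection $\ZD \to \ZC$ both as a left and as a right ideal, that ideal is two--sided, so $M(a-1) = E_1\ZD(a-1) \oplus M'(a-1)$ and
\[
M/M(a-1) \;\cong\; E_1\ZD/E_1\ZD(a-1) \;\oplus\; M'/M'(a-1) \;\cong\; \ZC \oplus M'/M'(a-1),
\]
the first summand being $\ZD/(a-1)\ZD$, generated by the image of $w_1 = E_1$. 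By Lemma \ref{rank5} the factor $M'/M'(a-1)$ has $\Z$--rank $3$, and it remains to identify it, as a $\ZD$--module, with $\Z^T \oplus \Z^T \oplus \Z$ on the images of $w_2$, $w_4$ and $w_3 + w_4 n$.

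First I would compute how $b$ acts on these three classes, working modulo $M'(a-1)$. From $a^i b = b a^{-i}$ one reads off $w_2 b = -w_2$; and $w_4 b + w_4$ factors as $\bigl(E_2(1-ba^{-1}) + E_3(b - a^{-1})\bigr)(a-1)$ with the bracketed element lying in $M'$ (it is annihilated by $\partial_2''$), so $w_4 b \sim -w_4$. The crucial point is that $w_3 b - w_3 - 2n\, w_4 \in M'(a-1)$, which forces $(w_3 + w_4 n) b \sim w_3 + w_4 n$. To prove it I would use that $\Sigma - 2n$ lies in the ideal generated by $a-1$, writing $\Sigma - 2n = \tau(a-1) = (a-1)\tau$ with $\tau$ a $\Z$--combination of powers of $a$, and then verify that $w_3 b - w_3 - 2n\, w_4 = v(a-1)$ for $v = -2n E_2 - E_3\, \tau(1+b) \in E_2\ZD \oplus E_3\ZD$. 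A short calculation gives $\partial_2''(v) = -\Sigma(1+b) = -\partial_2''(E_2\Sigma)$, hence $v + E_2\Sigma \in M'$; and since $E_2\Sigma \cdot (a-1) = 0$, replacing $v$ by $v + E_2\Sigma$ does not change $v(a-1)$, so $w_3 b - w_3 - 2n\, w_4 \in M'(a-1)$. The point I would be most careful about is exactly this last move: $M'(a-1)$ means $\{\, m(a-1) : m \in M'\,\}$, and right multiplication by $a-1$ has a nonzero kernel on $\ZD^3$ (the $a$--fixed elements), so one is free to adjust $v$ by anything $a-1$ annihilates before testing membership in $M'$. This is the heart of the lemma.

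With the $b$--actions in hand I would finish quickly. Since $a$ acts trivially on $M/M(a-1)$ and the $b$--actions just computed match those on $\Z^T \oplus \Z^T \oplus \Z$, mapping a basis of $\Z^T \oplus \Z^T \oplus \Z$ to the images of $w_2$, $w_4$, $w_3 + w_4 n$ defines a $\ZD$--module homomorphism $\phi$ into $M'/M'(a-1)$; it is surjective because $w_2, w_3, w_4$ generate $M'$ and $w_3 = (w_3 + w_4 n) - w_4 n$. Both source and target are finitely generated abelian groups of $\Z$--rank $3$, so $\ker\phi$ has rank $0$; being a subgroup of the torsion--free group $\Z^T \oplus \Z^T \oplus \Z$, it is trivial, and $\phi$ is an isomorphism. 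Combined with the $\ZC$ summand, this gives $M/M(a-1) \cong \ZC \oplus \Z^T \oplus \Z^T \oplus \Z$ generated by the images of $w_1, w_2, w_4, w_3 + w_4 n$, completing the argument.
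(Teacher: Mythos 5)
Your proposal is correct and follows essentially the same strategy as the paper: identify the images of $w_1,w_2,w_4,w_3+w_4n$ as generators, pin down the $b$-action on each modulo $M(a-1)$, and use the rank count from Lemma \ref{rank5} together with torsion-freeness of $\ZC\oplus\Z^T\oplus\Z^T\oplus\Z$ to upgrade the resulting surjection to an isomorphism. The one place you diverge is the key congruence $(w_3+w_4n)(1-b)\in M(a-1)$: you exhibit an explicit element $v+E_2\Sigma\in M'$ with $(v+E_2\Sigma)(a-1)=w_3(b-1)-2n\,w_4$, whereas the paper reaches the same conclusion more quickly by rewriting $(w_3+w_4n)(1-b)=-w_4\Sigma+w_4(1-b)n$, then using the already-established relation $w_4(1+b)\sim 0$ to reduce it to $w_4(2n-\Sigma)$, which visibly lies in $M(a-1)$ since $2n-\Sigma$ is a multiple of $a-1$; your peeling off of the $\ZC$ summand via $M=E_1\ZD\oplus M'$ is a harmless repackaging of the same rank argument.
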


  \begin{proof}  By Lemma \ref{doubleus} we know that the above four elements generate $M$ and hence $M/M(a-1)$ over $\ZD$.  Clearly the image of $w_1$ generates a copy of $\ZC$.  We must verify the action of $b$ on the images of $w_2,w_4,w_3+w_4n$ by showing:\begin{eqnarray}w_2(1+b),\,\,w_4(1+b),\,\, (w_3+w_4n)(1-b)\,\,\in M(a-1).\label{bactionid}\end{eqnarray}
As the action of $a$ on $M/M(a-1)$ is trivial, we then have that $M/M(a-1)$ is some quotient of $\ZC \oplus\Z^T\oplus\Z^T\oplus \Z$, with each summand generated by the required image.  However Lemma \ref{rank5} tells us that there can be no further $\ZD$--linear relations between the four images, and we are done.

We have:
\begin{eqnarray*}
w_2(1+b)&=&0\in M(a-1),\\
w_4(1+b)&\sim& w_4(1+ba)=w_2(a-1)\in M(a-1),\\
(w_3+w_4n)(1-b)&=&-w_4\Sigma +w_4(1-b)n\\&\sim& -w_4\Sigma +w_4(1-b)n+w_4(1+b)n\\&=&w_4(2n-\Sigma)\in M(a-1).
\end{eqnarray*}
\end{proof}

Recall that any surjection $s\co M \to \ZC$ is the composition of  the quotient map $M\to M/M(a-1)$ with some $\ZC$--linear map $M/M(a-1) \to \ZC$, which completely determines $s$. 

Let integers $s_1,s_2,s_3$ denote the multiples of $1-b\in\ZC$ that $s$ maps $w_1(1-b),$ $w_2, w_4$ to respectively.  

Let integers $s_4,s_5$ denote the multiples of $1+b\in \ZC$ that $s$ maps $w_1(1+b),w_3+w_4n$ to respectively.

We have that $s$ is completely described by these five integers (as $w_12=w_1(1-b)+w_1(1+b)$).  We write $s=[(s_1,s_2,s_3),(s_4,s_5)]$.

\begin{lemma}
The surjective maps $s\co M \to \ZC$ are precisely the maps  $s=[(s_1,s_2,s_3),(s_4,s_5)]$, for coprime integers $s_1,s_2,s_3$ and coprime integers $s_4,s_5$ with $s_1, s_4$ odd. \label{cond}
\end{lemma}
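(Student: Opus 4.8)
The plan is to use the observation made just before the statement: any surjection $s\co M\to\ZC$ is the quotient map $M\to M/M(a-1)$ followed by a $\ZC$--linear surjection $M/M(a-1)\to\ZC$. First I would pin down these $\ZC$--linear maps using the splitting $M/M(a-1)\cong\ZC\oplus\Z^T\oplus\Z^T\oplus\Z$ of Lemma \ref{baction}. Since $(1-b)\ZC=\Z(1-b)$ carries the $b$--action by $-1$ and $(1+b)\ZC=\Z(1+b)$ carries it by $+1$, a $\ZC$--linear map to $\ZC$ must send the two $\Z^T$ summands (generated by the images of $w_2$ and $w_4$) into $\Z(1-b)$ and the $\Z$ summand (generated by the image of $w_3+w_4n$) into $\Z(1+b)$, while the image of $w_1$ may be any element of $\ZC$. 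In terms of the five integers $s_1,\dots,s_5$ this reads $s(w_2)=s_2(1-b)$, $s(w_4)=s_3(1-b)$, $s(w_3+w_4n)=s_5(1+b)$ and $2s(w_1)=s_1(1-b)+s_4(1+b)$; the last equation forces $s_1\equiv s_4\pmod 2$, and conversely shows this congruence is the only constraint, so the maps $s$ correspond exactly to five integers with $s_1\equiv s_4\pmod 2$.

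The next step is to identify the image $s(M)\subseteq\ZC$. As $M$ is generated over $\ZD$ by $w_1,\dots,w_4$ (Lemma \ref{doubleus}) and $\ZD$ acts on $\ZC$ through the projection $\ZD\to\ZC$, the image is the ideal $\mathfrak{a}=s(w_1)\ZC+s_2(1-b)\ZC+s_3(1-b)\ZC+s_5(1+b)\ZC$, the contribution of $w_3$ being absorbed since $s(w_3)=s_5(1+b)-ns_3(1-b)$ already lies in $\mathfrak{a}$. So $s$ is surjective if and only if $\mathfrak{a}=\ZC$.

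To test whether $\mathfrak{a}=\ZC$ I would use that every maximal ideal of $\ZC\cong\Z[b]/(b^2-1)$ contains $b-1$ or $b+1$: such an ideal lies over some $(p)\subseteq\Z$ (the residue field being finite), and in $\mathbb{F}_p[b]/((b-1)(b+1))$ the product $(b-1)(b+1)$ vanishes, so any prime ideal there contains the image of $b-1$ or of $b+1$. Since $\ZC/(b-1)\cong\Z\cong\ZC/(b+1)$, it follows that $\mathfrak{a}=\ZC$ exactly when $\mathfrak{a}$ surjects onto each of these two copies of $\Z$. Now reducing modulo $b-1$ (i.e.\ setting $b=1$) sends $\mathfrak{a}$ onto $\gcd(s_4,2s_5)\,\Z$, since $1-b\mapsto 0$, $1+b\mapsto 2$ and $s(w_1)\mapsto s_4$; reducing modulo $b+1$ sends $\mathfrak{a}$ onto $\gcd(s_1,2s_2,2s_3)\,\Z$. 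Hence $s$ is surjective precisely when $\gcd(s_4,2s_5)=1$ and $\gcd(s_1,2s_2,2s_3)=1$, equivalently when $s_4$ is odd with $\gcd(s_4,s_5)=1$ and $s_1$ is odd with $\gcd(s_1,s_2,s_3)=1$; and because $s_1\equiv s_4\pmod 2$ already holds, the two oddness conditions are equivalent, giving the stated description.

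The only place where care is really needed is the first step: matching the module--theoretic freedom of a $\ZC$--linear map out of $M/M(a-1)$ (four generators, three of them constrained by the $b$--action) against the five--integer bookkeeping, and extracting from it the parity relation $s_1\equiv s_4\pmod 2$. After that the argument is routine: identifying $s(M)$ as the ideal $\mathfrak{a}$ and deciding when an ideal of $\ZC$ is the unit ideal. One could instead compute a Smith normal form of the integer matrix whose columns are $s(w_1),\,s(w_1)b,\,s_2(1-b),\,s_3(1-b),\,s_5(1+b)$ expressed in the basis $\{1,b\}$ of $\ZC$, but the two--quotient argument above seems cleaner.
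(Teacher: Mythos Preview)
Your proof is correct and follows essentially the same idea as the paper: both arguments reduce the surjectivity question to the two characters of $C_2$, i.e.\ to the quotient maps $\ZC\to\ZC/(b-1)\cong\Z$ (augmentation) and $\ZC\to\ZC/(b+1)\cong\Z$ (sign), obtaining the oddness and coprimality conditions. The paper's proof is terser---it records that $w_2,w_4,w_3+w_4n$ have even augmentation, infers that $s(w_1)$ must have odd augmentation, and then asserts the coprimality conditions and the converse without further comment---whereas you make the converse rigorous by observing that every maximal ideal of $\ZC$ contains $b-1$ or $b+1$, so surjectivity onto both quotients forces $\mathfrak{a}=\ZC$. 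You also make explicit the parity constraint $s_1\equiv s_4\pmod 2$ that is implicit in the paper's formula $s(w_1)=(s_1+s_4)/2+b(s_4-s_1)/2$; this is a nice clarification, since it explains why the five integers are not quite free parameters before surjectivity is imposed.
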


\begin{proof}
Note that $w_2,w_4,w_3+w_4n$ must all map to elements of even augmentation in $\ZC$.  Thus for $s$ to be surjective, $s(w_1)$ must have odd augmentation.  Thus $s_1,s_4$ must be odd.  Further for $s$ to be surjective, we must also have $s_1,s_2,s_3$ coprime, and $s_4,s_5$ coprime.

Conversely given any such integers we have a surjective map $s=[(s_1,s_2,s_3),(s_4,s_5)]$, where $$s(w_1)=(s_1+s_4)/2+b(s_4-s_1)/2,$$ noting that $s_1,s_4$ are both odd, so the division is permissible.
\end{proof}

Thus all potential minimal elements of $\Omega_3(\Z)$ are parametrized by quintuples of integers satisfying these coprimality and oddness conditions.

\section{Isomorphisms between kernels} \label{isom}

Now we know that any minimal element of $\Omega_3(\Z)$ is isomorphic to the kernel of a map $s=[(s_1,s_2,s_3),(s_4,s_5)]$ satisfying the conditions of Lemma \ref{cond}.  In this section we construct isomorphisms between the kernels of maps corresponding to different quintuples of integers.  Ultimately we will show that without loss of generality we may assume that $s=[(1,0,0),(1,l)]$, where $l$ is a positive factor of $2n$.

For $i=1,2,3,4$ define automorphisms $\phi_i\co \ZD^3\stackrel \sim \to\ZD^3$ by:
\begin{eqnarray*}
\phi_1\co E_2 &\mapsto& E_2+w_1,\\
\phi_2\co  E_3 &\mapsto& E_3+w_1,\\
\phi_3\co E_1 &\mapsto& E_1+w_2,\\
\phi_4\co  E_1 &\mapsto& E_1+w_4,\\
\end{eqnarray*}
and in each case $\phi_i$ maps the other two generators (out of $E_1,E_2,E_3$) to themselves.  As each $\phi_i$ differs from the identity by a map which factors through the inclusion $M\to \ZD^3$, the following diagram commutes:
$$\begin{array}{ccllccccc}
0& \to & M &\longrightarrow & \ZD^3& \stackrel {\partial_2''} \longrightarrow &I^T&\to& 0\\
&&\,\,\downarrow\phi_i\vert_M&&\downarrow\phi_i&&\,\,\downarrow 1
\\
0& \to & M &\longrightarrow & \ZD^3& \stackrel {\partial_2''} \longrightarrow &I^T&\to& 0\\
\end{array}
$$
and the restriction of $\phi_i$ to $M$ is an isomorphism $\phi_i\vert_M\co M \stackrel \sim \to M$.

\noindent For $i=1,2,3,4$ define linear maps $\phi_i'\co \Z^3 \to \Z^3$ and $\phi_i'' \co \Z^2 \to \Z^2$ by:
\begin{eqnarray*}
\phi_1'(s_1,s_2,s_3)=(s_1,s_2+s_1,s_3), \,\,\,&\qquad\qquad&\phi_1''(s_4,s_5)=(s_4,s_5),\\
\phi_2'(s_1,s_2,s_3)=(s_1,s_2,s_3-s_1), \,\,\,&\qquad\qquad&\phi_2''(s_4,s_5)=(s_4,s_5+ns_4),\\
\phi_3'(s_1,s_2,s_3)=(s_1+2s_2,s_2,s_3), &\qquad\qquad&\phi_3''(s_4,s_5)=(s_4,s_5),\\
\phi_4'(s_1,s_2,s_3)=(s_1+2s_3,s_2,s_3), &\qquad\qquad&\phi_4''(s_4,s_5)=(s_4,s_5).\\
\end{eqnarray*}

\begin{lemma}\label{operations}
For $i=1,2,3,4$ and integers $s_1,s_2,s_3,s_4,s_5$ satisfying the conditions of Lemma \ref{cond} let{\rm:}$$
s=[(s_1,s_2,s_3),(s_4,s_5)], \qquad \qquad s'=[\phi_i'(s_1,s_2,s_3),\phi_i''(s_4,s_5)].$$
Then the following diagram commutes{\rm:}
$$\begin{array}{ccllccccc}
 M& \stackrel {s'} \longrightarrow &\ZC&\\
\qquad\,\,\downarrow\phi_i\vert_M&&\,\,\,\,\downarrow 1
\\
 M& \stackrel {s} \longrightarrow &\ZC\\
\end{array}
$$
so the isomorphism $\phi_i\vert_M$ restricts to an isomorphism between the kernels of $s$ and $s'$.
\end{lemma}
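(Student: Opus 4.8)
The plan is to prove commutativity by a direct check on the four module generators $w_1,w_2,w_3,w_4$ of $M$. Since these generate $M$ (Lemma~\ref{doubleus}) and $\phi_i|_M$ is an isomorphism, it is enough to show $s(\phi_i(w_j))=s'(w_j)$ for $j=1,2,3,4$, with both sides computed from the quintuple data via the conventions fixed before Lemma~\ref{cond}: $s$ factors through $M/M(a-1)$, is $\ZC$--linear, and is determined by $s(w_1(1-b))=s_1(1-b)$, $s(w_2)=s_2(1-b)$, $s(w_4)=s_3(1-b)$, $s(w_1(1+b))=s_4(1+b)$ and $s(w_3+w_4n)=s_5(1+b)$; in particular $s(w_3)=s_5(1+b)-ns_3(1-b)$ and $2s(w_1)=s_1(1-b)+s_4(1+b)$.

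First I would record how each $\phi_i$ moves the generators. Using $w_1=E_1$, the definitions give: $\phi_1(w_1)=w_1$, $\phi_1(w_2)=w_2+w_1(1-b)$, $\phi_1(w_3)=w_3$, $\phi_1(w_4)=w_4+w_1(a-1)$; $\phi_2(w_1)=w_1$, $\phi_2(w_2)=w_2$, $\phi_2(w_3)=w_3+w_1\Sigma$, $\phi_2(w_4)=w_4-w_1(1-ba)$; $\phi_3$ fixes $w_2,w_3,w_4$ and sends $w_1\mapsto w_1+w_2$; and $\phi_4$ fixes $w_2,w_3,w_4$ and sends $w_1\mapsto w_1+w_4$. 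In every case the correction term lies in $M$ and is already expressed in the $w_k$.

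Then I would apply $s$, using three facts: (a) $s$ kills $M(a-1)$, so $s(w_1(a-1))=0$; (b) under the quotient $D_{4n}\to C_2$ we have $a\mapsto 1$, hence $\Sigma\mapsto 2n$, $1-ba\mapsto 1-b$ and $1+ba\mapsto 1+b$; (c) $s(w_1)(1-b)=s_1(1-b)$ and $s(w_1)(1+b)=s_4(1+b)$. For instance, for $i=2$: $s(\phi_2(w_3+w_4n))=s(w_3+w_4n)+s(w_1)\bigl(\Sigma-n(1-ba)\bigr)=s_5(1+b)+s(w_1)\cdot n(1+b)=(s_5+ns_4)(1+b)$, which is exactly $s'(w_3+w_4n)$ for $s_5'=s_5+ns_4$; and $s(\phi_2(w_4))=s(w_4)-s(w_1)(1-b)=(s_3-s_1)(1-b)$, matching $s_3'=s_3-s_1$. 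The other cases ($i=1,3,4$) are entirely analogous, the only nontrivial identities being those on $w_1$ for $i=3,4$ and on $w_2,w_4$ for $i=1$.

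I do not expect a genuine obstacle; the lemma is a bookkeeping computation. The one point to watch is the generator $w_1$ under $\phi_3$ and $\phi_4$, where the naive comparison of $s(\phi_i(w_1))$ with $s'(w_1)$ involves halves: it is cleanest either to compare $2s(\phi_i(w_1))$ with $2s'(w_1)=s_1'(1-b)+s_4'(1+b)$ and use that $\ZC$ is torsion free, or to work throughout with the explicit formula $s(w_1)=(s_1+s_4)/2+b(s_4-s_1)/2$. Beyond that, care is needed only in keeping the reductions modulo $M(a-1)$ and the $C_2$--images of $\Sigma$ and $1\pm ba$ straight.
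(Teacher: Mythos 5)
Your proposal is correct and is essentially the paper's proof: both verify $s\circ\phi_i|_M = s'$ by direct evaluation on a generating set of $M$, using the identities in (\ref{bactionid}) and the $C_2$--images of $\Sigma$ and $1\pm ba$. The only cosmetic difference is that the paper evaluates on the five elements $w_1(1-b),\,w_2,\,w_4,\,w_1(1+b),\,w_3+w_4n$ (which match the quintuple coordinates directly and avoid your halves issue), whereas you use the four $\ZD$--generators $w_1,w_2,w_3,w_4$ and handle the $w_1$ comparison by doubling.
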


\begin{proof}
Direct calculation (recalling (\ref{bactionid})) shows that the maps $s\phi_i\vert_M$ and $s'$ yield the same result when evaluated on  $w_1(1-b),$  $w_2,w_4,w_1(1+b),w_3+w_4n\in M$.  
\end{proof}

Thus the isomorphism class of the kernel of a map $M \to \ZC$ corresponding to a quintuple of integers is unaltered by applying the operations $\phi_i',\phi_i''$ to the integers.

\begin{lemma}
Any minimal element of $\Omega_3(\Z)$ is isomorphic to the kernel of a map $s\co M\to \ZC$, where $s=[(1,0,0), (r,x)]$ with $r$ an odd integer and $x,r$ coprime. \label{xr}
\end{lemma}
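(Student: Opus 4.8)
The plan is to start from an arbitrary surjection $s = [(s_1,s_2,s_3),(s_4,s_5)]$ satisfying the conditions of Lemma \ref{cond} (so $s_1,s_2,s_3$ coprime, $s_4,s_5$ coprime, $s_1,s_4$ odd) and to use the operations $\phi_1',\phi_3',\phi_4'$ on the triple $(s_1,s_2,s_3)$, together with Lemma \ref{operations}, to reduce the first component to the form $(1,0,0)$ without changing the isomorphism class of $\ker(s)$. The key observation is that $\phi_3'$ and $\phi_4'$ add $2s_2$ and $2s_3$ respectively to $s_1$, while $\phi_1'$ adds $s_1$ to $s_2$; combined with their inverses (which are again of the same shape, since these are all elementary transvections over $\Z$), these generate enough moves on the triple $(s_1,s_2,s_3)$ to perform a Euclidean-type algorithm. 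The oddness of $s_1$ is what makes this work: since $s_1$ is odd and $\gcd(s_1,s_2,s_3)=1$, one can first use $\phi_1'$ and its analogue to replace $s_2,s_3$ by their residues modulo $s_1$, then use $\phi_3',\phi_4'$ to shrink $s_1$ using $2s_2, 2s_3$, and iterate; the gcd of $\{s_1, 2s_2, 2s_3\}$ is either $1$ or $2$, but since $s_1$ stays odd throughout, the process terminates with $s_1 = \pm 1$ and $s_2 = s_3 = 0$. A sign change (which is realized by composing with an automorphism of $M$, or simply by negating a generator) lets us take $s_1 = 1$.

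Next I would track what these operations do to the second pair $(s_4,s_5)$. Among $\phi_1',\phi_3',\phi_4'$, only $\phi_2$ has nontrivial $\phi_2''$ (it sends $(s_4,s_5) \mapsto (s_4, s_5 + n s_4)$), and $\phi_1,\phi_3,\phi_4$ all leave $(s_4,s_5)$ fixed. So carrying out the reduction of the first component via $\phi_1,\phi_3,\phi_4$ and their inverses leaves $(s_4,s_5)$ completely unchanged. Writing $r = s_4$ (odd, by Lemma \ref{cond}) and $x = s_5$, and noting that $s_4,s_5$ coprime is preserved, we arrive at $s = [(1,0,0),(r,x)]$ with $r$ odd and $\gcd(r,x)=1$, which is exactly the claimed form.

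The main obstacle is bookkeeping: one must check that the Euclidean reduction on $(s_1,s_2,s_3)$ can indeed be carried out using only the four operations $\phi_i'$ and their inverses (each $\phi_i'$ is an elementary integer matrix, hence invertible over $\Z$, and its inverse is realized by $\phi_i^{-1}|_M$, again an isomorphism of $M$), and that at no intermediate stage do we violate the coprimality or oddness hypotheses needed to keep invoking Lemma \ref{cond}. The coprimality $\gcd(s_1,s_2,s_3)=1$ is invariant under elementary moves, and oddness of $s_1$ is maintained because every move changes $s_1$ by an even amount ($2s_2$ or $2s_3$) or not at all. Thus each intermediate quintuple still satisfies the conditions of Lemma \ref{cond}, so Lemma \ref{operations} applies at every step and the composite of the relevant $\phi_i|_M$ gives the desired isomorphism $\ker(s) \cong \ker([(1,0,0),(r,x)])$. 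I do not expect any genuine difficulty beyond organizing this induction cleanly.
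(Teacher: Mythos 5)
Your plan has a genuine gap: you claim to reduce $(s_1,s_2,s_3)$ to $(1,0,0)$ using only $\phi_1', \phi_3', \phi_4'$ (and their inverses), precisely so that the second pair $(s_4,s_5)$ is left untouched, but this cannot work. Inspecting the four operations, $\phi_1'$ alters only $s_2$, while $\phi_3'$ and $\phi_4'$ alter only $s_1$; \emph{none} of these three touches $s_3$. The only operation that modifies $s_3$ is $\phi_2'$, which sends $s_3 \mapsto s_3 - s_1$. So if $s_3 \neq 0$ initially, no word in $\phi_1', \phi_3', \phi_4'$ and their inverses can ever kill it. Your sentence ``use $\phi_1'$ and its analogue to replace $s_2, s_3$ by their residues modulo $s_1$'' implicitly acknowledges this --- the ``analogue'' acting on $s_3$ is $\phi_2'$ --- but then you contradict yourself by asserting the reduction avoids $\phi_2$ altogether.

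Once you allow $\phi_2$ (as you must), the claim that $(s_4,s_5)$ is unchanged fails: $\phi_2''$ sends $(s_4,s_5)$ to $(s_4, s_5 + n s_4)$. Fortunately the lemma does not ask for $(r,x)$ to equal the original $(s_4,s_5)$; it only asks that the final $r$ be odd and $r,x$ coprime. Since $\phi_2''$ fixes $s_4$ and changes $s_5$ by a multiple of $s_4$, both conditions are preserved under all the operations, and the conclusion still holds. The paper's proof does exactly this: it runs a Euclidean reduction on $(s_1,s_2)$ using $\phi_1',\phi_3'$, then on $(s_1,s_3)$ using $\phi_2',\phi_4'$, invokes preserved coprimality to get $s_1 = \pm 1$, and fixes the sign of $s_1$ by the specific word $\phi_2'\phi_4'\phi_2'$ (your appeal to ``an automorphism of $M$'' or ``negating a generator'' is too vague --- the only automorphisms of $M$ available in this argument are those induced by the $\phi_i$). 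To repair your proposal, drop the requirement that $(s_4,s_5)$ be fixed, include $\phi_2$ in the reduction, and simply observe that the oddness and coprimality conditions of Lemma \ref{cond} are invariant under all four $\phi_i$.
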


\begin{proof}
From Lemma \ref{MtoZC2} and Lemma \ref{cond} we know that any minimal element of  $\Omega_3(\Z)$ is isomorphic to the kernel of some map $s\co M\to \ZC$, with $s=[(s_1,s_2,s_3), (s_4,s_5)]$ satisfying the conditions of Lemma \ref{cond}.  Further by Lemma \ref{operations} we are free to apply the operations $\phi_i'$ and $\phi_i''$ to the five integers, without altering the isomorphism class of the kernel.  Note also that these operations preserve the coprimality and oddness conditions of Lemma \ref{cond}.  It remains to show that by repeatedly applying the operations $\phi_i'$, we may reduce $(s_1,s_2,s_3)$ to $(1,0,0)$.

We apply a version of Euclid's algorithm first to $s_1,s_2$, using the operations $\phi_1',\phi_3'$.  Note that whilst $\phi_1'$ allows us to divide $s_2$ by $s_1$ and replace $s_2$ with the remainder, $\phi_3'$ only allows us to divide $s_1$ by $2s_2$ and replace $s_1$ with the remainder.  However this is still sufficient for guaranteeing that the remainder has modulus less than or equal to the modulus of $s_2$ (allowing negative remainders). 

Thus eventually we will have $s_2=0$ (we cannot have $s_1=0$ as $s_1$ remains odd).  We then repeat the process with the operations $\phi_2'$ and $\phi_4'$ to obtain $s_3=0$.  As $(s_1,s_2,s_3)$ remain coprime under the all operations $\phi_i'$, we must have $s_1=\pm1$.  
If $s_1=-1$, then we may apply $\phi_2'\phi_4'\phi_2'$ to $(-1,0,0)$ to get $(1,0,0)$.
\end{proof}

Now let $s=[(1,0,0), (r,x)]$ with $r$ an odd integer and $x,r$ coprime.  Recall that $s$ is the composition of the quotient map $M \to M/M(a-1)$ with a map $\bar{s}\co M/M(a-1) \to \ZC$.

\begin{lemma}
The kernel of $\bar{s}$ is generated over $\Z$ by the images of{\rm:} $$w_2,\,\,w_4,\,\, w_1(1+b)x-(w_3+w_4n)r.$$
\end{lemma}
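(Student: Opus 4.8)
The plan is to prove this by direct computation, using the explicit $\Z$--basis of $M/M(a-1)$ from Lemma \ref{baction}.  By that lemma $M/M(a-1)\cong\ZC\oplus\Z^T\oplus\Z^T\oplus\Z$, with the summands generated by the images of $w_1,w_2,w_4,w_3+w_4n$; taking $\{w_1,w_1b\}$ as a $\Z$--basis of the $\ZC$--summand, I would first record that $w_1,\,w_1b,\,w_2,\,w_4,\,w_3+w_4n$ is then a $\Z$--basis of $M/M(a-1)$, consistent with the rank $5$ of Lemma \ref{rank5}.

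Next I would evaluate $\bar{s}$ on this basis.  Since $s=[(1,0,0),(r,x)]$, the definitions preceding Lemma \ref{cond} give $\bar{s}(w_2)=\bar{s}(w_4)=0$, $\bar{s}(w_3+w_4n)=x(1+b)$, $\bar{s}(w_1(1-b))=1-b$ and $\bar{s}(w_1(1+b))=r(1+b)$; using $s(w_1)=(1+r)/2+b(r-1)/2$ (legitimate because $r$ is odd) together with $\ZC$--linearity one gets $\bar{s}(w_1)=(1+r)/2+b(r-1)/2$ and $\bar{s}(w_1b)=(r-1)/2+b(1+r)/2$.  Then, writing a general element as $\alpha w_1+\beta w_1b+\gamma w_2+\delta w_4+\mu(w_3+w_4n)$ and expanding $\bar{s}$ of it against the $\Z$--basis $\{1,b\}$ of $\ZC$, the condition that it lie in the kernel becomes the pair of equations obtained by setting the coefficients of $1$ and of $b$ to zero; their difference forces $\alpha=\beta$, and the surviving equation then reads $\alpha r+\mu x=0$, with $\gamma,\delta$ free.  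Since $r$ and $x$ are coprime the integer solutions of $\alpha r+\mu x=0$ are exactly $(\alpha,\mu)=(xt,-rt)$ for $t\in\Z$, so the kernel of $\bar{s}$ is freely generated over $\Z$ by $w_2$, $w_4$, and $x\,w_1(1+b)-r(w_3+w_4n)$, which is the assertion.

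I expect no serious obstacle: once the abelian group structure of $M/M(a-1)$ is pinned down the rest is a bounded computation.  The points that need care are that $w_1(1-b)$ and $w_1(1+b)$ span only an index $2$ subgroup of the $\ZC$--summand, so it is cleanest to work with the genuine basis $\{w_1,w_1b\}$; that dividing by $2$ in the formula for $s(w_1)$ is permissible precisely because $s_1=1$ and $s_4=r$ are odd; and that one should solve the linear system outright rather than merely exhibit three independent kernel elements, since only that shows the three listed elements generate the kernel with no finite index gap.
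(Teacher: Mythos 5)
Your proof is correct and reaches the same conclusion by what is essentially the same computation — both you and the paper work from the $\Z$--structure of $M/M(a-1)$ given by Lemma~\ref{baction}, evaluate $\bar{s}$, and solve for the kernel. The one difference is how the $\ZC$ summand is handled. You take the genuine $\Z$--basis $\{w_1, w_1 b\}$ and solve the resulting $2\times 5$ integer linear system outright (correctly noting that the half--integer entries $(1+r)/2,(r-1)/2$ are legitimate because $r$ is odd, and that the difference of the two equations forces $\alpha=\beta$). The paper instead uses an augmentation--parity argument: since $w_2,w_4,w_3+w_4n$ map into the even--augmentation part of $\ZC$ while $w_1$ maps to something of odd augmentation, any kernel element must have its $w_1$--coefficient of even augmentation, i.e.\ lie in the $\Z$--span of $w_1(1-b)$ and $w_1(1+b)$. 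This lets the paper split the kernel condition into two independent pieces landing in $\Z(1-b)$ and $\Z(1+b)$ respectively, avoiding denominators entirely. The paper's version is a touch more structured; yours is a more pedestrian but equally valid linear--algebra solve, and the caveats you flag (the index--$2$ span, the odd divisibility, the need to solve rather than merely exhibit kernel elements) are exactly the right points of care.
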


\begin{proof}
Recall from Lemma \ref{baction} that $M/(M(a-1) \cong \ZC \oplus \Z^T \oplus\Z^T \oplus \Z$, with the summands generated by $w_1, w_2,w_4,w_3+w_4n$ respectively.  Also recall that $s$ maps the last three of these to elements of even augmentation in $\ZC$, and maps $w_1$ to an element of odd augmentation in $\ZC$.  Thus given an element $\zeta$ of the kernel of $\bar{s}$, the $\ZC$ coefficient on the image of $w_1$ must have even augmentation.  

Thus $\zeta$ is in the span of the images of $w_1(1-b), w_2, w_4$ and  $w_1(1+b),w_3+w_4n$ and we may write $\zeta=\zeta_1+\zeta_2$ with $\zeta_1$ in the span of the first three and $\zeta_2$ in the span of the latter two.  Then $\bar{s}(\zeta)=0$ if and only if $\bar{s}(\zeta_1)=0$ and $\bar{s}(\zeta_2)=0$.  

As $s=[(1,0,0), (r,x)]$, we have $\zeta_1$ in the span of the images of $w_2,w_4$ and $\zeta_2$ in the span of  the image of $w_1(1+b)x-(w_3+w_4n)r$.
\end{proof}

Fix $\hat{K}=\langle M(a-1),w_2, w_4\rangle\subset M$ and let $$\zeta_{x,r}=w_1(1+b)x-(w_3+w_4n)r.$$  Also let $K_{x,r}$ denote the kernel of $s$.  We may conclude:

\begin{lemma}
We have  $K_{x,r}=\langle \hat{K},\zeta_{x,r}\rangle\subset M$.
\end{lemma}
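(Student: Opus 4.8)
The plan is to read this off directly from the preceding lemma. Recall that $s$ is the composite $M \stackrel{q}{\longrightarrow} M/M(a-1) \stackrel{\bar{s}}{\longrightarrow}\ZC$, where $q$ is the quotient map, so that $K_{x,r}=\ker(s)=q^{-1}(\ker\bar{s})$. The preceding lemma identifies $\ker\bar{s}$ as the subgroup of $M/M(a-1)$ generated over $\Z$ by the images of $w_2$, $w_4$ and $\zeta_{x,r}$. Taking the $q$-preimage of this abelian subgroup (which merely adjoins $\ker(q)=M(a-1)$ and nothing else) therefore gives $K_{x,r}=M(a-1)+\Z w_2+\Z w_4+\Z\zeta_{x,r}$ as a subgroup of $M$.

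Next I would upgrade this to the claimed description as a $\ZD$-submodule. The inclusion $M(a-1)+\Z w_2+\Z w_4+\Z\zeta_{x,r}\subseteq\langle\hat{K},\zeta_{x,r}\rangle=\langle M(a-1),w_2,w_4,\zeta_{x,r}\rangle$ is immediate, since a $\Z$-span sits inside the corresponding $\ZD$-span. For the reverse inclusion it suffices to check that each of the four $\ZD$-generators on the right lies in $K_{x,r}$ and that $K_{x,r}$, being the kernel of the $\ZD$-module homomorphism $s$, is a $\ZD$-submodule. That $M(a-1)\subseteq K_{x,r}$ was already observed, since $a$ acts trivially on $\ZC$; and for the normalised map $s=[(1,0,0),(r,x)]$ one has, straight from the definition of the integers $s_i$, that $s(w_2)=0$, $s(w_4)=0$, and $s(\zeta_{x,r})=xr(1+b)-rx(1+b)=0$. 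Combining the two inclusions yields $K_{x,r}=\langle\hat{K},\zeta_{x,r}\rangle$.

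I do not expect any genuine obstacle here: the statement is essentially a bookkeeping corollary of the previous lemma. The only points needing a little care are the passage from abelian-group generators of $\ker\bar{s}$ to generators of $K_{x,r}$ over $\ZD$ — that is, the fact that taking the $q$-preimage simply adds the submodule $M(a-1)$ — and the routine verification that the chosen $s$ annihilates $w_2$, $w_4$ and $\zeta_{x,r}$ for the normalised quintuple.
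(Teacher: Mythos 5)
Your argument is correct and is precisely the bookkeeping the paper leaves implicit (the paper offers no proof — it simply writes ``We may conclude:''). You correctly pass from the $\Z$-generators of $\ker\bar{s}$ to the $q$-preimage, note that $K_{x,r}$ is a $\ZD$-submodule as the kernel of a module map, and verify $s$ kills the four claimed $\ZD$-generators, giving equality of the two descriptions.
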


\newpage
\begin{lemma}
For integers $p,q$ not both $0$, the isomorphism class of $\langle \hat{K},\zeta_{p,q}\rangle$ is determined by the congruence classes of $p$ modulo $2n$ and $q$ modulo $2$.
\end{lemma}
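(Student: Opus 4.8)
The plan is to establish the two asserted congruence reductions separately — first that $\zeta_{p,q}$ may be replaced by $\zeta_{p',q}$ whenever $p \equiv p' \pmod{2n}$, and second that $\zeta_{p,q}$ may be replaced by $\zeta_{p,q'}$ whenever $q \equiv q' \pmod 2$ — and then to combine these to reduce any pair $(p,q)$ to one with $0 \le p < 2n$ and $q \in \{0,1\}$.

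For the reduction modulo $2n$ in the first variable, the key observation is that $\Sigma = 1 + a + \cdots + a^{2n-1}$ and that $w_3 = E_3\Sigma$ and $w_4 = E_2(a-1) - E_3(1-ba)$ interact well with multiplication by $a$. Concretely, I would look for an automorphism of $M$, analogous to the $\phi_i\vert_M$ of Section~\ref{isom}, which fixes $\hat K = \langle M(a-1), w_2, w_4\rangle$ setwise and sends $\zeta_{p,q}$ to $\zeta_{p\pm 2n,q}$ (or to $\zeta_{p\pm 2n, q}$ modulo $\hat K$). The natural candidate exploits the relation $w_1(1+b)\Sigma \equiv 0$ or $w_4\Sigma \in M(a-1)$ type identities already used in the proof of Lemma~\ref{baction}: since $a$ acts trivially on $M/M(a-1)$, we have $w_1(1+b)\cdot 2n \sim w_1(1+b)\Sigma$, and pushing this through should change the coefficient $p$ by a multiple of $2n$ while leaving the $w_3 + w_4 n$ term untouched up to $\hat K$. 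The automorphism of $\ZD^3$ realizing this will be of the form $E_1 \mapsto E_1 + (\text{something in } M)$, chosen so that it restricts to the identity on the relevant quotient except for the shift in $p$.

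For the reduction modulo $2$ in the second variable, I would use the identity $w_3 + w_4 n \sim \frac{1}{2}$ of something — more precisely, recall from Lemma~\ref{baction} that $(w_3 + w_4 n)(1-b) = w_4(2n - \Sigma) \in M(a-1)$, so the image of $w_3 + w_4 n$ in $M/M(a-1)$ is $b$-fixed and generates the $\Z$ summand. Changing $q$ by $2$ amounts to adding $2(w_3 + w_4 n)r$-type corrections; but $2(w_3 + w_4 n) \sim (w_3+w_4n)(1+b) + (w_3+w_4n)(1-b) \sim (w_3+w_4n)(1+b)$, which lives in the $\Z^T$ summand generated by $w_4$, hence inside $\hat K$. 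So an automorphism $E_3 \mapsto E_3 + w_4$ (or a suitable variant, cf.\ $\phi_2\vert_M$) should shift $q$ by $2$ while preserving $\hat K$ and the congruence class of $p$.

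The main obstacle will be bookkeeping: verifying that the proposed automorphisms of $\ZD^3$ do restrict to $M$ (this needs each modification to factor through $M \hookrightarrow \ZD^3$, as in Section~\ref{isom}) and that their restrictions genuinely fix $\hat K$ setwise rather than merely up to something one has to chase further. One must also check that under these automorphisms the generator $\zeta_{p,q}$ goes to $\zeta_{p',q'}$ modulo $\hat K$ — not just to some element of $\langle \hat K, \zeta_{p',q'}\rangle$ — so that the submodules $\langle \hat K, \zeta_{p,q}\rangle$ and $\langle \hat K, \zeta_{p',q'}\rangle$ are actually carried onto one another. These are direct but somewhat delicate calculations using the identities \eqref{bactionid} and the relations among $w_1,\dots,w_4$; I would organize them by first computing each automorphism's effect on the five distinguished elements $w_1(1-b), w_2, w_4, w_1(1+b), w_3+w_4 n$ modulo $M(a-1)$, exactly as in the proof of Lemma~\ref{operations}.
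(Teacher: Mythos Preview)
Your plan contains a concrete error in the $q$--reduction step. You argue that $2(w_3+w_4n) \sim (w_3+w_4n)(1+b)$ lies in the $\Z^T$ summand of $M/M(a-1)$ generated by $w_4$, hence in $\hat K$. But by Lemma~\ref{baction} the element $w_3+w_4n$ generates the $\Z$ summand on which $b$ acts \emph{trivially}, so $(w_3+w_4n)(1+b)\sim 2(w_3+w_4n)$ stays in that $\Z$ summand and is not in the span of $w_2,w_4$. Consequently $\zeta_{p,q+2}-\zeta_{p,q}=-2(w_3+w_4n)\notin\hat K$, and the analogous difference $\zeta_{p+2n,q}-\zeta_{p,q}=w_1(1+b)\,2n$ (which lives in the $\ZC$ summand generated by $w_1$) is likewise not in $\hat K$. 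The submodules $\langle\hat K,\zeta_{p,q}\rangle$ and $\langle\hat K,\zeta_{p,q+2}\rangle$ of $M$ are therefore genuinely distinct, so an automorphism of $M$ that fixes $\hat K$ and merely perturbs $\zeta_{p,q}$ by an element of $\hat K$---which is what your candidates of the form $E_i\mapsto E_i+m$ would do---cannot carry one onto the other. In particular the suggested map $E_3\mapsto E_3+w_4$ does not produce the required shift.

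The paper avoids this difficulty by abandoning the search for an ambient automorphism of $M$. Instead it computes $\zeta_{p+2n,q}\sim\zeta_{p,q}+E_1(1+b)\Sigma$ and $\zeta_{p,q+2}\sim\zeta_{p,q}-E_3(1+b)\Sigma$, and observes that each correction term $E_i(1+b)\Sigma$ is annihilated by both $a-1$ and $b-1$. Now each module $\langle\hat K,t\rangle$ is $\hat K\oplus\Z t$ as an abelian group, and its $\ZD$--module structure is determined entirely by the values $t(a-1),\,t(b-1)\in M(a-1)$; since these values are unchanged when $t$ is altered by $E_i(1+b)\Sigma$, the three modules are \emph{abstractly} isomorphic via the map which is the identity on $\hat K$ and sends one choice of $t$ to another. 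The key point you are missing is that this isomorphism need not (and does not) extend to an automorphism of $M$.
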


\begin{proof}
We have: \begin{eqnarray*}
\zeta_{p+2n,q}&=&\zeta_{p,q}+w_1(1+b)2n\,\,\sim\,\, \zeta_{p,q}+E_1(1+b)\Sigma,\\
\zeta_{p,q+2}&=&\zeta_{p,q}-(w_3+w_4n)2\,\,\sim\,\, \zeta_{p,q}-w_32-w_4\Sigma=\zeta_{p,q}-E_3(1+b)\Sigma.
\end{eqnarray*}

Each of $\langle \hat{K},\zeta_{p,q}\rangle,\langle \hat{K},\zeta_{p+2n,q}\rangle,\langle \hat{K},\zeta_{p,q+2}\rangle$ is isomorphic to $\hat{K}\oplus \Z$ as an abelian group, where the generator $t\in M$ of the summand $\Z$ is $$\zeta_{p,q},\,\, \zeta_{p,q}+E_1(1+b)\Sigma,\,\,\zeta_{p,q}-E_3(1+b)\Sigma$$ respectively.

In each case, the isomorphism class of the module is then determined by the action of $\ZD$ on $t$, which is given by the values of $t(a-1),t(b-1)\in M(a-1)$.  However these values are clearly the same in all three cases.
\end{proof}

Note that as $r$ is always odd (Lemma \ref{xr}) we have in general $K_{x,r} \cong K_{x,1}$ (clearly the integers $x,1$ are always coprime).  Thus we now have that any minimal element of $\Omega_3(\Z)$ is isomorphic to $K_{x,1}$ for some congruence class of integer $x$ modulo $2n$.  We complete this section by showing that multiplying $x$ by a unit in $\Z/2n\Z$ does not alter the isomorphism class of $K_{x,1}$.

Let $r$ be a positive integer coprime to $2n$ and let the positive integer $u$ satisfy $ur+\lambda 2n=1$, for some integer $\lambda$.  Let $\alpha,\beta$ be given by:
$$
\alpha=\sum_{i=0}^{r-1} a^i,\qquad\qquad \beta=\sum_{i=0}^{u-1} a^{ri}.$$  Clearly we have: \begin{eqnarray}\alpha\beta=\beta\alpha=1-\lambda\Sigma. \label{inverse}\end{eqnarray}

Let $\psi_r,\psi_r'\co \ZD^3\to \ZD^3$ be the maps acting as the identity on $E_2,E_3$ and mapping:
\begin{eqnarray*}\psi_r \co E_1 &\mapsto& E_1 \alpha\\\psi_r'\co  E_1 &\mapsto& E_1 \beta.\\
\end{eqnarray*}

\newpage
\begin{lemma}
The map $\psi_r$ restricts to an isomorphism $K_{x,1} \stackrel\sim\to K_{rx,1}$.
\end{lemma}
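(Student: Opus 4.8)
The plan is to treat $\psi_r$ on the two constituents of the presentation $K_{x,1}=\langle\hat K,\zeta_{x,1}\rangle$ separately: I will show (a) that $\psi_r$ restricts to an automorphism of $\hat K$, and (b) that $\psi_r(\zeta_{x,1})$ lies in the coset $\zeta_{rx,1}+\hat K$. Granting (a) and (b), and recalling $K_{rx,1}=\langle\hat K,\zeta_{rx,1}\rangle$, we get $\psi_r(K_{x,1})=\langle\psi_r(\hat K),\psi_r(\zeta_{x,1})\rangle=\langle\hat K,\zeta_{rx,1}\rangle=K_{rx,1}$; since $\psi_r$ is injective ($\alpha$ is a non--zero--divisor in $\ZD$, being a unit in $\QD$ by (\ref{inverse}) together with $\Sigma^2=2n\Sigma$ and $ur\neq 0$), the restriction $\psi_r\co K_{x,1}\to K_{rx,1}$ is then the desired isomorphism. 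As preliminaries: $\psi_r$ fixes $E_2,E_3$ and hence $w_2,w_3,w_4$, and sends $w_1=E_1$ to $w_1\alpha$; and since $\partial_2''E_1=0$, both $\psi_r$ and $\psi_r'$ map $M$ into $M$, hence $M(a-1)$ into $M(a-1)$, and (fixing $w_2,w_4$) they map $\hat K$ into $\hat K$.

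For (a) the content is surjectivity of $\psi_r$ on $\hat K$, and here $\psi_r'$ serves as an approximate inverse. A direct computation with (\ref{inverse}) gives $\psi_r\psi_r'=\mathrm{id}+\theta$, where $\theta$ annihilates the second and third free summands of $\ZD^3$ and, on the first free summand $E_1\ZD\cong\ZD$, is left multiplication by $-\lambda\Sigma$. The first coordinate of any element of $M(a-1)$ — hence of $\hat K$ — lies in the two-sided ideal $(a-1)\ZD=\ZD(a-1)$, which $\Sigma$ annihilates on the left since $\Sigma(a-1)=0$; as $w_2,w_4$ have zero first coordinate, it follows that $\theta$ vanishes on $\hat K$. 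Thus $\psi_r\psi_r'$ is the identity on $\hat K$, so for $h\in\hat K$ we have $h=\psi_r(\psi_r'(h))$ with $\psi_r'(h)\in\hat K$; hence $\psi_r\vert_{\hat K}$ is onto, i.e. an automorphism of $\hat K$.

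For (b), since $\alpha-r=\sum_{i=0}^{r-1}(a^i-1)\in(a-1)\ZD$ we may write $\alpha-r=(a-1)\gamma$. Using $\alpha(1+b)-(1+b)r=(\alpha-r)(1+b)$ we obtain
\[
\psi_r(\zeta_{x,1})=w_1\alpha(1+b)x-(w_3+w_4n)=\zeta_{rx,1}+w_1(\alpha-r)(1+b)x=\zeta_{rx,1}+\bigl(w_1(a-1)\bigr)\gamma(1+b)x,
\]
and $w_1(a-1)=E_1(a-1)\in M(a-1)$, so the correction term lies in $M(a-1)\subseteq\hat K$. Hence $\psi_r(\zeta_{x,1})\in\zeta_{rx,1}+\hat K$, and combining this with (a) finishes the proof.

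The one genuinely non-formal point is the surjectivity in step (a): $\psi_r$ is plainly not onto $\ZD^3$ — nor $M$, nor $\hat K$ as an abelian group — so it cannot be dealt with by a rank count. The mechanism is that $\psi_r'$ inverts $\psi_r$ up to the error $\theta$, which is supported on the rank-two right ideal $\Sigma\ZD$ inside the first free summand and is therefore killed by right multiplication by $a-1$ — precisely the operation defining the submodule $M(a-1)\subseteq\hat K$. This is exactly why both $\psi_r$ and $\psi_r'$ are introduced; the remaining steps are routine manipulations with $w_1,\dots,w_4$ and the identities $\Sigma a=\Sigma$, $\Sigma^2=2n\Sigma$, $\alpha\beta=1-\lambda\Sigma$.
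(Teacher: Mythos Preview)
Your proof is correct and follows essentially the same route as the paper: use $\psi_r'$ as an approximate inverse via $\alpha\beta=1-\lambda\Sigma$ to see that $\psi_r$ restricts to an automorphism of $\hat K$ (the paper states this for $M(a-1)$ and then notes $\psi_r$ fixes $w_2,w_4$), and then compute $\psi_r(\zeta_{x,1})\equiv\zeta_{rx,1}$ modulo $\hat K$. One small slip in your closing commentary: you write that $\psi_r$ is ``not onto $\hat K$ as an abelian group'', but step~(a) establishes exactly that it is---presumably you meant that surjectivity onto $\hat K$ is not automatic from rank considerations and genuinely requires the $\psi_r'$ argument.
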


\begin{proof}
From (\ref{inverse}) we know that $\psi_r\psi_r'$ and $\psi_r'\psi_r$ restrict to the identity on $M(a-1)$.  Thus $\psi_r$ restricts to an isomorphism $M(a-1) \stackrel \sim \to M(a-1)$.  Further: $$\psi_r(w_2)=w_2,\qquad\qquad \psi_r(w_4)=w_4,$$ so $\psi_r$ restricts to an isomorphism $\hat{K}\stackrel \sim \to\hat{K}$.  Finally we note:
\begin{eqnarray*}
\psi_r(\zeta_{x,1})&=&w_1\alpha (1+b)x-(w_3+w_4n)\\&\sim& w_1(1+b)rx-(w_3+w_4n)=\zeta_{rx,1}.
\end{eqnarray*}\end{proof}

Given an integer $x$ let $l={\rm hcf}(x,2n)$.  There is a positive integer $r$ coprime to $2n$, so that $rx \equiv l$ modulo $2n$.  We conclude:

\begin{lemma}
Any minimal element of $\Omega_3(\Z)$ is isomorphic to $K_{l,1}$ for some positive factor $l$ of $2n$. \label{minisKl}
\end{lemma}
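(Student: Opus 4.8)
The plan is to chain together the reductions assembled over the course of this section, so the proof is essentially a summary. By Lemma \ref{xr}, any minimal element of $\Omega_3(\Z)$ is isomorphic to $K_{x,r}$ for some integers $x,r$ with $r$ odd and $x,r$ coprime. First I would combine the oddness of $r$ with the lemma stating that the isomorphism class of $\langle\hat K,\zeta_{p,q}\rangle$ depends only on $p$ modulo $2n$ and $q$ modulo $2$: since $\zeta_{x,r}$ and $\zeta_{x,1}$ have the same first index and second indices congruent modulo $2$, we obtain $K_{x,r}\cong K_{x,1}$. It therefore remains to show that every $K_{x,1}$ is isomorphic to $K_{l,1}$ for some positive factor $l$ of $2n$.

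Next I would set $l=\mathrm{hcf}(x,2n)$ and pick, as asserted in the paragraph immediately preceding the statement, a positive integer $r$ coprime to $2n$ with $rx\equiv l\pmod{2n}$. The preceding lemma supplies the isomorphism $\psi_r$ restricting to $K_{x,1}\stackrel{\sim}{\to}K_{rx,1}$, and since $rx\equiv l\pmod{2n}$ the same ``$p$ modulo $2n$'' lemma gives $K_{rx,1}\cong K_{l,1}$. As $l$ is a positive divisor of $2n$, this is exactly the desired conclusion.

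The only ingredient not already packaged as a cited lemma is the existence of that integer $r$. Writing $x=lx'$ and $2n=lm$, the equality $\mathrm{hcf}(x,2n)=l$ forces $\mathrm{hcf}(x',m)=1$, so $x'$ is a unit modulo $m$; taking an inverse of $x'$ modulo $m$ and then, by the Chinese Remainder Theorem, adjusting within its residue class modulo $m$ so as to avoid $0$ modulo each prime dividing $2n$ but not $m$ (and to be positive) yields a positive $r$ coprime to $2n$ with $rx'\equiv 1\pmod m$, equivalently $rx\equiv l\pmod{2n}$. This is elementary number theory, so I do not expect any real obstacle here; the genuine content of the result was dispatched in the earlier lemmas of this section, and this statement merely records the outcome.
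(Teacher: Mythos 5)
Your proof is correct and follows essentially the same route as the paper: reduce to $K_{x,1}$ via the ``$p$ mod $2n$, $q$ mod $2$'' lemma, then use $\psi_r$ with $r$ coprime to $2n$ and $rx\equiv l\pmod{2n}$ to land on $K_{l,1}$. The only addition is your explicit CRT justification for the existence of such an $r$, which the paper states without proof; that verification is sound.
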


\begin{proof}
We know that a minimal element of $\Omega_3(\Z)$ is isomorphic to $K_{x,1}$ for some integer $x$.  Picking $r,l$ as above, we have:$$K_{x,1}\cong K_{rx,1}\cong K_{l,1}.$$
\end{proof}

\section{Separating the stable classes of the $ K_{l,1}$} \label{stabsep}

Our goal is to show that up to isomorphism there is only one minimal module in $\Omega_3(\Z)$.  In \S\ref{rest} we showed that any such module must be isomorphic to the kernel of some surjective map $M\to \ZC$.  In \S\ref{isom} we proceeded to construct isomorphisms between the kernels of such maps, to the point where we know that any such kernel must be isomorphic to one of the $ K_{l,1}$, for $l$ a positive factor of $2n$.

That is as far as we go in that direction.  We now change tack and instead of constructing isomorphisms between our remaining candidates, we will eliminate all but one of them, by showing that they are not even stably equivalent to $J$ (the minimal element of $\Omega_3(\Z)$ from (\ref{res})).

\begin{lemma}
For each $l$ a positive factor of $2n$, we have an exact sequence{\rm:}
\begin{eqnarray}
0\to K_{l,1}\to\ZD^3 \stackrel{{\partial_2}^{(l)}}\to\ZD^2\stackrel {{\partial_1}^{(l)}}\to \ZD, \label{exactseq}
\end{eqnarray}
where
$${\partial_2}^{(l)}= \left[ \begin{array}{ccc} \Sigma & 0 & l(1+ba) \\ 0 & 1+b & a-1 \end{array} \right], \qquad \qquad{\partial_1}^{(l)}= \left[ \begin{array}{cc} 1-a &l(1-b)\end{array} \right],
$$
with respect to the basis $\{E_1,\, E_2,\,E_3\}$ of $\ZD^3$ and $\{e_1, \, e_2\}$ of $\ZD^2$.
\end{lemma}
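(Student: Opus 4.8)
The plan is to establish three facts: that the displayed sequence is a chain complex, that $\ker(\partial_2^{(l)})=K_{l,1}$, and that it is exact at $\ZD^2$. The first is quickest: since $l$ is a central integer, $\partial_1^{(l)}=\partial_1\circ\Theta_l$ and $\Theta_l\circ\partial_2^{(l)}=\partial_2\circ\Psi_l$, where $\Theta_l\co\ZD^2\to\ZD^2$ sends $e_1\mapsto e_1$, $e_2\mapsto e_2l$ and $\Psi_l\co\ZD^3\to\ZD^3$ sends $E_1\mapsto E_1$, $E_2\mapsto E_2l$, $E_3\mapsto E_3l$; hence $\partial_1^{(l)}\partial_2^{(l)}=\partial_1\partial_2\Psi_l=0$ by (\ref{res}) (one may equally expand the matrix product directly, using the relations of $D_{4n}$).

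To identify $\ker(\partial_2^{(l)})$, note that the $e_2$--component of $\partial_2^{(l)}$ coincides with $\partial_2''$, so $\ker(\partial_2^{(l)})\subseteq M$. The $e_1$--component of $\partial_2^{(l)}$, restricted to $M$, evaluates on the generators of $M$ from Lemma \ref{doubleus} — using $ba\Sigma=\Sigma b$ and $(ba)^2=1$ — as $w_1\mapsto\Sigma$, $w_2\mapsto 0$, $w_3\mapsto l\Sigma(1+b)$, $w_4\mapsto 0$. Hence it takes values in $\Sigma\ZD\cong\ZC$, and on comparison with the definitions of \S\ref{rest} it is precisely the surjection $s=[(1,0,0),(1,l)]$ of Lemma \ref{cond}. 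Therefore $\ker(\partial_2^{(l)})=\ker(s)=K_{l,1}$.

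For exactness at $\ZD^2$, suppose $\partial_1^{(l)}(e_1y_1+e_2y_2)=0$. Then $\Theta_l(e_1y_1+e_2y_2)=e_1y_1+e_2(ly_2)$ lies in $\ker(\partial_1)$, so by (\ref{res}) there is $z=E_1z_1+E_2z_2+E_3z_3$ with $\Sigma z_1+(1+ba)z_3=y_1$ and $(1+b)z_2+(a-1)z_3=ly_2$. Since $\ZD/I^T\cong\Z^T$ is torsion free (see the proof of Lemma \ref{rank5}), the second equation forces $y_2\in I^T$; write $y_2=(1+b)v_2+(a-1)v_3$. Then $E_2(z_2-lv_2)+E_3(z_3-lv_3)$ lies in $M$ (as $(1+b)(z_2-lv_2)+(a-1)(z_3-lv_3)=0$) with vanishing $E_1$--component, hence in the $\ZD$--span of $w_2,w_3,w_4$ (Lemma \ref{doubleus}). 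Feeding this element through the $e_1$--component of $\partial_2^{(l)}$, using the values just computed and cancelling $l$, gives $(1+ba)(z_3-lv_3)\in\Sigma\ZD$, so $y_1-l(1+ba)v_3\in\Sigma\ZD$ by the first equation. Choosing $u_1$ with $\Sigma u_1=y_1-l(1+ba)v_3$, the element $E_1u_1+E_2v_2+E_3v_3$ maps under $\partial_2^{(l)}$ to $e_1y_1+e_2y_2$, which completes the argument.

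The chain complex condition and the kernel identification are short; the substance lies in the exactness at $\ZD^2$, and within that the delicate point — the step I expect to be the main obstacle — is arranging the $e_1$--component of a would-be preimage to lie in $\Sigma\ZD$, that is, keeping track of the coupling between the two rows of $\partial_2^{(l)}$.
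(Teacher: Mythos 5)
Your proof is correct, and while the kernel identification follows the paper's argument verbatim (restrict to $M$, read off $s=[(1,0,0),(1,l)]$), your treatment of the other two points is a genuinely different route. For the chain complex condition the paper simply checks by evaluation, whereas you introduce the scaling maps $\Theta_l$, $\Psi_l$ with $\partial_1^{(l)}=\partial_1\Theta_l$ and $\Theta_l\partial_2^{(l)}=\partial_2\Psi_l$, reducing it to $\partial_1\partial_2=0$. For exactness at $\ZD^2$ you then reuse $\Theta_l$ to pull a kernel element back to a $\partial_2$--preimage in the standard resolution (\ref{res}), deduce $y_2\in I^T$ from torsion-freeness of $\Z^T$, and solve by a column correction supported on $E_2,E_3$; the paper instead splits $\nu_2$ into polynomial-in-$a$ pieces, shows the relevant polynomial is divisible by $a-1$ (the same torsion-free quotient fact in a different guise), subtracts a multiple of the third column of $\partial_2^{(l)}$, and finishes by the annihilator descriptions of $\Sigma\ZD$ and $(1+b)\ZD$. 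Both hinge on the same two ingredients (a torsion-free quotient to cancel $l$, and a column manipulation to produce the preimage), but yours makes the dependence on the known exactness of (\ref{res}) structural and avoids hand computation with polynomial expressions in $a$, at the cost of invoking Lemma \ref{doubleus} a second time to recognize the residual as lying in $\langle w_2,w_3,w_4\rangle$. Both are correct; yours is arguably the more conceptual presentation, the paper's the more self-contained one.
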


\begin{proof}
The component of ${\partial_2}^{(l)}$ mapping into the second summand of $\ZD^2$ is just $\partial_2'$, so it has kernel $M$.  Let $$s\co M \to \ZC$$ denote the component of ${\partial_2}^{(l)}$ mapping into the first summand of $\ZD^2$ restricted to $M$ (again making the identification $\Sigma\ZD\cong \ZC$).  The kernel of ${\partial_2}^{(l)}$ is then the kernel of $s$.
We have:
\begin{eqnarray*}
s(w_1(1-b)) &=&1-b,  
\qquad
\qquad
\,\,\hspace{.1mm} s(w_1(1+b))=1+b,
\\
s(w_2) &=& 0,
\qquad
\qquad
\qquad s(w_3+w_4n)=l(1+b).
\\
s(w_4) &=&0,
\end{eqnarray*}
Thus $s=[(1,0,0), (1,l)]$ and has kernel precisely $K_{l,1}$.

By evaluation we see that ${\partial_1}^{(l)}{\partial_2}^{(l)}=0$.  Let $\left(\begin{array}{c}\nu_1\\ \nu_2 \end{array}\right)$ be an arbitrary element of the kernel of ${\partial_1}^{(l)}$.  We have \begin{eqnarray}
(a-1)\nu_1=l(1-b)\nu_2. \label{aminus1}
\end{eqnarray}
Now $l(1-b)\nu_2=l(1-b)p$ for $p$ some polynomial expression in $a$.  From (\ref{aminus1}) we know $p=(a-1)q$, for $q$ some polynomial expression in $a$.  Now let $$\left(\begin{array}{c}\nu_1'\\ \nu_2' \end{array}\right)=\left(\begin{array}{c}\nu_1\\ \nu_2 \end{array}\right)-\left(\begin{array}{c}l(1+ba)\\ a-1 \end{array}\right)q.$$
Then $(a-1)\nu_1'=l(1-b)\nu_2'=0$ and $\nu_1'=\Sigma \mu_1,\nu_2'=(1+b)\mu_2$, for some $\mu_1,\mu_2 \in \ZD$.  We conclude:$$
\left(\begin{array}{c}\nu_1\\ \nu_2 \end{array}\right)=
\left(\begin{array}{c}l(1+ba)\\ a-1 \end{array}\right)q
+\left(\begin{array}{c}\Sigma\\ 0 \end{array}\right)\mu_1
+\left(\begin{array}{c}0\\ 1+b \end{array}\right)\mu_2,
$$
so the columns of ${\partial_2}^{(l)}$ generate the kernel of ${\partial_1}^{(l)}$.
\end{proof}

Let $I_l\lhd \ZD$ denote the right ideal generated by $1-a, l(1-b)$.  We regard it as a right submodule of $\ZD$.  Note that $I_l$ is $\Z$--torsion free as it is a submodule of $\ZD$.  From (\ref{exactseq}) we have the exact sequence:
$$0\to K_{l,1}\to\ZD^3 \stackrel{{\partial_2}^{(l)}}\to\ZD^2\to I_l\to 0.$$

\begin{lemma}
If $K_{x,1}$ is stably equivalent to $K_{y,1}$ for positive factors $x,y$ of $2n$, then $I_x$ is stably equivalent to $I_y$. \label{stabI}
\end{lemma}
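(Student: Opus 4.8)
The plan is to recognise each $K_{l,1}$ as a second syzygy of $I_l$, and then to show that passing to syzygies is injective on stable classes of finitely generated $\Z$--torsion free $\ZD$--modules.

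From the exact sequence $0\to K_{l,1}\to\ZD^3\to\ZD^2\to I_l\to 0$ established just before this lemma, let $W_l\subseteq\ZD^2$ denote the image of ${\partial_2}^{(l)}$ (equivalently the kernel of ${\partial_1}^{(l)}$). This yields two short exact sequences
$$0\to K_{l,1}\to\ZD^3\to W_l\to 0,\qquad 0\to W_l\to\ZD^2\to I_l\to 0,$$
in which every module is $\Z$--torsion free (being a submodule of a free module). It therefore suffices to prove the following principle and apply it twice: \emph{if $0\to S_1\to P_1\to A\to 0$ and $0\to S_2\to P_2\to B\to 0$ are short exact sequences of finitely generated $\ZD$--modules with $P_1,P_2$ free and $A,B$ $\Z$--torsion free, and $S_1$ is stably equivalent to $S_2$, then $A$ is stably equivalent to $B$.} Applied to $0\to K_{x,1}\to\ZD^3\to W_x\to 0$ and $0\to K_{y,1}\to\ZD^3\to W_y\to 0$, this first shows $W_x$ stably equivalent to $W_y$; applied again to $0\to W_x\to\ZD^2\to I_x\to 0$ and $0\to W_y\to\ZD^2\to I_y\to 0$, it shows $I_x$ stably equivalent to $I_y$, as required.

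To prove the principle, first replace $P_1$ by $P_1\oplus F$ and $P_2$ by $P_2\oplus F'$ for suitable free modules $F,F'$, so that (after the corresponding replacement of the $S_i$ via Baer sum with $0\to F\xrightarrow{=}F\to 0\to 0$) we may assume $S_1=S_2=:S$, with inclusions $\phi_1\co S\hookrightarrow P_1$ and $\phi_2\co S\hookrightarrow P_2$. Set $C=(P_1\oplus P_2)/\{(\phi_1 s,\phi_2 s):s\in S\}$. A direct check shows that $p_2\mapsto(0,p_2)$ and $(p_1,p_2)\mapsto p_1+\phi_1(S)$ define a short exact sequence $0\to P_2\to C\to A\to 0$, and symmetrically $0\to P_1\to C\to B\to 0$ is exact. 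Since $A$ and $B$ are $\Z$--torsion free, so is $C$, and free $\ZD$--modules are injective relative to the tame class of finitely generated $\Z$--torsion free modules (property $\mathcal{T}3$, cf.\ \cite[pp.~69--70]{John1}); hence both sequences split, giving $A\oplus P_2\cong C\cong B\oplus P_1$. As $P_1,P_2$ are free, this is precisely a stable equivalence between $A$ and $B$.

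The delicate point is the splitting step, which is exactly where $\Z$--torsion freeness enters: the principle genuinely fails for arbitrary modules (compare $0\to 2\Z\to\Z\to\Z/2\to0$ with $0\to 3\Z\to\Z\to\Z/3\to0$, whose left-hand terms are isomorphic), and it is the fact that $W_l$ and $I_l$ sit inside free modules that rescues us. A minor convenience of this route is that the construction delivers the conclusion already in the form $A\oplus(\textrm{free})\cong B\oplus(\textrm{free})$, so no rank bookkeeping is required; one only needs to have checked, as above, that the intermediate module $W_l$ is $\Z$--torsion free.
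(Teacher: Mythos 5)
Your argument is correct, and it is genuinely different from the one in the paper. The paper keeps the four-term exact sequences
$0\to K_{l,1}\oplus F\to\ZD^3\oplus F\to\ZD^2\to I_l\to 0$
intact, observes that every term is finitely generated and $\Z$--torsion free so that ${\rm Hom}_{\Z}(\,\_\,,\Z)$ preserves exactness, dualises, applies the classical Schanuel's lemma for projective resolutions to the dual sequences to obtain $I_x^*\oplus({\rm free})\cong I_y^*\oplus({\rm free})$, and then dualises back. You instead never dualise: you split each four-term sequence into two short exact sequences through the intermediate module $W_l$ and prove a ``co-Schanuel'' principle directly, via the pushout $C=(P_1\oplus P_2)/\{(\phi_1 s,\phi_2 s)\}$ and the splitting of $0\to P_2\to C\to A\to 0$ and $0\to P_1\to C\to B\to 0$ afforded by the fact that free modules are injective relative to the tame class of finitely generated $\Z$--torsion free modules (property $\mathcal{T}3$). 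Both routes hinge on the same hypothesis --- that all the modules involved lie in the tame class --- but they exploit it in different ways: the paper uses it to make dualisation exact and then borrows cancellation from the ordinary Schanuel lemma, whereas you use it to make a pushout split. Your version has the virtue of being self-contained within the injectivity machinery the paper has already set up in \S\ref{minimal}, and of making explicit (with the $\Z/2$ versus $\Z/3$ example) where torsion-freeness is indispensable; the paper's dualisation trick is shorter on the page because it piggybacks on Schanuel. One small correction: direct summing a short exact sequence with $0\to F\stackrel{=}{\to}F\to 0\to 0$ is not a Baer sum (Baer sum requires the two extensions to share the same kernel and cokernel); what you are doing is the ordinary direct sum of exact sequences, which is what you want here.
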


\begin{proof}
Suppose $K_{x,1}\oplus F_1 \cong K_{y,1} \oplus F_2$ for f.g.\!\! free modules $F_1,F_2$.  We have exact sequences: 
\begin{eqnarray*}
0\to K_{x,1}\oplus F_1\to\ZD^3\oplus F_1\stackrel{({\partial_2}^{(x)},0)}\longrightarrow\ZD^2\to I_x\to 0,
\\
0\to K_{y,1}\oplus F_2\to\ZD^3 \oplus F_2 \stackrel{({\partial_2}^{(y)},0)}\longrightarrow\ZD^2\to I_y\to 0.
\end{eqnarray*}
All the modules in these exact sequences are $\Z$--torsion free and finitely generated over $\Z$, so we preserve exactness when we  dualise the sequences by applying the functor ${\rm Hom}_{\Z}(\__,\,\Z)$.  Note this functor takes a right $\ZD$ module $N$ to a right $\ZD$ module $N^*$, with the $\ZD$ action given by $(fg)x=f(xg^{-1})$ for all $f\in N^*, x\in N, g\in D_{4n}$.

Thus by Schanuel's Lemma we have $$I_x^* \oplus F_3 \cong I_y^*\oplus F_4,$$ for f.g.\!\! free modules $F_3,F_4$.  Dualising again we have $I_x$ stably equivalent to $I_y$.
\end{proof}

Clearly ${\partial_2}^{(1)}=\partial_2$, $K_{1,1}=J$, and $I_1=I$, the augmentation ideal in $\ZD$. 

\begin{lemma} Let $y$ be a positive factor of $2n$.  Then $I_y$ is stably equivalent to $I$ if and only if $I_y \cong I$. \label{isoI}
\end{lemma}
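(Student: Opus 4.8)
The plan is to reduce the claim to cancellation of free modules in the stable class $\Omega_1(\Z)$ of the augmentation ideal over $\ZD$. One direction is trivial, so suppose $I_y$ is stably equivalent to $I$; I aim to deduce $I_y\cong I$. First I would match the free parts. Passing from $\ZD$ to $\ZD/(1-a)\cong\ZC$ and then killing $y(1-b)$ leaves a module whose underlying abelian group is $\Z\oplus\Z/y$, so $\rk I_y=4n-1=\rk I$; consequently any stable equivalence $I_y\oplus F\cong I\oplus F'$ with $F,F'$ f.g.\ free has $\rk F=\rk F'$, and we may write $I_y\oplus\ZD^{a}\cong I\oplus\ZD^{a}$. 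Since $I\oplus\ZD^{a}$ is the kernel of the surjection $\ZD^{a+1}\to\Z$ that is the augmentation on the first coordinate and zero on the others, $I_y\oplus\ZD^{a}$ is such a kernel too, so $I_y$, like $I$, is a minimal element of $\Omega_1(\Z)$ (of $\Z$--rank $4n-1$).

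It then remains to cancel the $\ZD^{a}$. Because $I\oplus\ZD$ and $I_y\oplus\ZD$ each contain a free direct summand, the Swan--Jacobinski Theorem \cite[Theorem 15.1]{John1} applied to $(I_y\oplus\ZD)\oplus\ZD^{a-1}\cong(I\oplus\ZD)\oplus\ZD^{a-1}$ reduces the problem to the single remaining case $I_y\oplus\ZD\cong I\oplus\ZD$. To delete this last copy of $\ZD$ I would use that $\ZD$ satisfies the Eichler property --- every simple component of $\QD$ is a matrix algebra over a totally real field, never a totally definite quaternion algebra --- together with a computation of the resulting $K_1$/genus obstruction, to show that the stably free part of the minimal level of $\Omega_1(\Z)$ over $\ZD$ consists of the single isomorphism class $\{I\}$. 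Equivalently, one may simply quote the known cancellation statement for the minimal level of $\Omega_1(\Z)$ over integral group rings of dihedral groups. Either way, $I_y\oplus\ZD\cong I\oplus\ZD$ then forces $I_y\cong I$.

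The step I expect to be the main obstacle is precisely this final cancellation. The form of Swan--Jacobinski used elsewhere in the paper requires a free direct summand in order to cancel, but $I$ has none: its $\Z$--rank $4n-1$ is one short of that of $\ZD$. Nor is $I$ faithful --- $\QD\otimes I$ omits the trivial component --- so Jacobinski's cancellation theorem does not apply to it directly either. Establishing that free-module cancellation nevertheless holds at the minimal level of $\Omega_1(\Z)$ over $\ZD$, so that $\{I\}$ is the only stably free syzygy class there, is where the real content lies; I would discharge it through the $K_1$/genus analysis sketched above, or by appeal to the literature.
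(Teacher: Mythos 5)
Your approach matches the paper's: observe that $\rk I_y=\rk I$, then invoke free-module cancellation in the stable class of $I$ over $\ZD$. You correctly diagnose the obstacle — $I$ has no free direct summand (so the free-summand form of Swan--Jacobinski is unavailable) and $\QD\otimes I$ misses the trivial simple component (so Jacobinski's faithful-module cancellation is also unavailable) — and the paper resolves it exactly in the way you gesture at, by appeal to the literature: it cites \cite[Proposition 2.7(i)]{John5}, which asserts that for a finite group satisfying the Eichler condition, cancellation of free modules holds in the stable class of the augmentation ideal. With that reference in hand, your preliminary Swan--Jacobinski reduction to the case of a single $\ZD$ summand is superfluous (the cited result cancels any number of free summands at once), and your proposed $K_1$/genus computation is not actually carried out — so as written the proposal has a placeholder precisely where the paper cites \cite{John5} — but the strategy and the identification of what needs proving are both correct.
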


\begin{proof}
As $D_{4n}$ is a finite group satisfying the Eichler condition, we have that cancellation of free modules holds in the stable class of $I$ \cite[Proposition 2.7(i)]{John5}.  We then need only note that $I_y$ and $I$ have the same rank over $\Z$. 
\end{proof}

 It remains to show that for $y>1$ a factor of $2n$, the modules $I$ and $I_y$ are not isomorphic and hence not stably equivalent.

\newpage
\begin{lemma}
For $y>1$ a factor of $2n$, the modules $I$ and $I_y$ are not isomorphic.\label{notiso}
\end{lemma}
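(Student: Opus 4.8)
The plan is to separate $I$ and $I_y$ by evaluating an isomorphism--invariant functor on both and comparing the answers. The most obvious candidate, the coinvariants $N\mapsto N\otimes_{\ZD}\Z$, turns out to be slightly too coarse (it fails to distinguish the two modules when $y=2$ and $n$ is odd), so instead I would use base change along the quotient ring homomorphism $\ZD\twoheadrightarrow\ZC$ coming from the surjection $D_{4n}\to C_2$ by which $\ZD$ acts on $\ZC$--modules (equivalently, reduction modulo the two--sided ideal $(a-1)\ZD$): put $T(N)=N\otimes_{\ZD}\ZC$. This is a right exact functor from right $\ZD$--modules to right $\ZC$--modules, so if $I\cong I_y$ then $T(I)\cong T(I_y)$, even just as abelian groups.

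To compute $T$ on our ideals, feed it the free presentation extracted from (\ref{exactseq}), namely $\ZD^3\stackrel{{\partial_2}^{(l)}}{\longrightarrow}\ZD^2\to I_l\to 0$ (recall that $I_l$ is the image of ${\partial_1}^{(l)}$). Right exactness of $T$ yields $\ZC^3\stackrel{\bar A}{\longrightarrow}\ZC^2\to T(I_l)\to 0$, where $\bar A$ is obtained from ${\partial_2}^{(l)}$ by putting $a=1$:
\[\bar A=\left[\begin{array}{ccc}2n&0&l(1+b)\\0&1+b&0\end{array}\right].\]
Over $\ZC$ one computes the cokernel directly: the second coordinate contributes $\ZC/(1+b)\ZC\cong\Z$, and the first coordinate contributes $\ZC/\big(2n\,\ZC+l(1+b)\Z\big)$, which is $\Z^2$ modulo the sublattice spanned by $(2n,0),(0,2n),(l,l)$, of index $2nl$ with elementary divisors $l,2n$. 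Hence $T(I_l)\cong\Z\oplus\Z/2n\oplus\Z/l$ as abelian groups. In particular $T(I)=T(I_1)\cong\Z\oplus\Z/2n$, whose torsion subgroup has order $2n$, while $T(I_y)\cong\Z\oplus\Z/2n\oplus\Z/y$ has torsion subgroup of order $2ny$. Since $y>1$ these torsion subgroups have different orders, so $T(I)\not\cong T(I_y)$ and therefore $I\not\cong I_y$.

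I do not anticipate a real obstacle here: the argument is naturality of $T$ followed by an elementary computation with integer matrices. The only point requiring thought is the choice of functor: one must retain enough of the $D_{4n}$--action to see the discrepancy, so reducing all the way to trivial $\Z$--coefficients is too lossy and one instead keeps the residual $C_2$--action. Beyond that, the sole thing to watch is the elementary--divisor computation for $\bar A$, where one uses that $l$ divides $2n$.
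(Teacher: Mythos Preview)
Your argument is correct and takes a genuinely different route from the paper. The paper proceeds by contradiction: it assumes an isomorphism $\theta\co I\to I_y$, extends it (using that $\ZD$ is injective relative to the tame class of f.g.\ torsion free modules) to left multiplication by an element $p+qb\in\ZD$, analyses the induced automorphism of the $\Sigma$--annihilator $(a-1)\ZD$ to show $t(p+qb)$ is a unit modulo $2n$ for the character $t\co a\mapsto 1,\,b\mapsto -1$, and then observes that $\theta(1-b)\in I_y$ forces $y\mid t(p+qb)$, a contradiction. Your approach instead applies the right exact functor $T=-\otimes_{\ZD}\ZC$ to the free presentation coming from ${\partial_2}^{(l)}$ and reads off $T(I_l)\cong\Z\oplus\Z/2n\oplus\Z/l$ by an elementary--divisor computation, distinguishing $I$ from $I_y$ by the order of the torsion subgroup. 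Your method is more functorial and avoids the relative injectivity machinery; it also makes transparent exactly which invariant separates the ideals, and your remark that plain coinvariants $-\otimes_{\ZD}\Z$ are too coarse (failing at $y=2$, $n$ odd) is a nice diagnostic. The paper's approach, on the other hand, stays inside $\ZD$ and gives more explicit control over what an isomorphism would have to look like, which could be useful if one wanted finer information about $\Hom(I,I_y)$.
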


\begin{proof}
Let $\theta\co I \to I_y$ be an isomorphism.  Note the cokernel of the inclusion $I \hookrightarrow \ZD$ is torsion free and finitely generated.   The module $\ZD$ is strongly injective relative to the tame class of f.g.\!\! torsion free $\ZD$ modules \cite[Lemma 19.5]{John1}.  We therefore have a $\ZD$--linear map $\hat{\theta}\co \ZD \to \ZD$ such that the following diagram commutes:
\begin{center}${}$
\xymatrix{I\ar[r]\ar[d]^\theta& \ZD\ar[d]^{\hat{\theta}}\\
I_y\ar[r]&\ZD}
\end{center}
Then $\hat{\Theta}$ is just left multiplication by some element $p+qb\in\ZD$, where $p,q$ are polynomial expressions in $a$.

Now $\theta$ restricts to an isomorphism between the annihilators of $\Sigma$ in $I$ and $I_y$.  These annihilators of $\Sigma$ are:  $$(a-1)\ZD\lhd I,\qquad\qquad (a-1)\ZD\lhd I_y.$$   Let $\sigma\co (a-1)\ZD \to (a-1)\ZD$ denote the inverse of this restriction.

As before, the cokernel of the inclusion $(a-1)\ZD\hookrightarrow \ZD$ is torsion free and finitely generated.  Thus we have a commutative diagram:
\begin{center}${}$
\xymatrix{(a-1)\ZD\ar[r]\ar[d]^\sigma& \ZD\ar[d]^{\hat{\sigma}}\\
(a-1)\ZD\ar[r]&\ZD}
\end{center}
where $\hat{\sigma}\co \ZD \to \ZD$ is left multiplication by some element $p'+q'b\in \ZD$, with $p',q'$ polynomial expressions in $a$.  

We know that $\hat{\sigma}\hat{\theta}$ restricts to the identity on $(a-1)\ZD$.  Thus $$(p'+q'b)(p+qb)=1+\Sigma\gamma,$$ for some $\gamma\in\ZD$.

Let $t\co\ZD \to \Z$ be the ring homomorphism mapping $a\mapsto 1,\,\, b\mapsto -1$.  We have:$$t(p'+q'b)t(p+qb)\equiv 1 \,{\rm modulo}\, 2n.$$  Thus we have $t(p+qb)$ coprime to $2n$.

Now \begin{eqnarray*}\theta(1-b)&=&(p+qb)(1-b)\\&=&(p-q)(1-b)\\&=&t(p+qb)(1-b)+(a-1)\delta(1-b),
\end{eqnarray*} for some $\delta$ a polynomial expression in $a$.

But $\theta(1-b)\in I_y$, so $y$ divides $t(p+qb)$. This contradicts $t(p+qb)$ coprime to $2n$.
\end{proof}

\noindent  {\bf Theorem A} {\sl Over $\ZD$ any module in $\Omega_3(\Z)$ is  of the form $J \oplus \ZD^r$.}

\begin{proof}
Suppose $K$ is in $\Omega_3(\Z)$ and not isomorphic to $J \oplus \ZD^r$ for any integer $r\geq 0$.  Then $K$ must be a minimal element of $\Omega_3(\Z)$, so by Lemma \ref{minisKl} we know that $K\cong K_{y,1}$ for some $y>1$ a factor of $2n$.  Then by Lemma \ref{stabI} we have that $I_y$ is stably equivalent to $I$, and hence by Lemma \ref{isoI} we know that $I \cong I_y$, contradicting Lemma \ref{notiso}.
\end{proof}

As Johnson has verified the D(2) property for $D_{4n+2}$ \cite{John, John1}, in the light of Theorem \ref{shea} taken from \cite{Shea} we may conclude:

\noindent {\bf Theorem B} {\sl All finite dihedral groups satisfy the D(2) property.}

\begin{proof}
This has been shown for the dihedral groups $D_{4n+2}$ \cite{John,John1}.  Suppose now that $X$ is a cohomologically 2 dimensional finite cell complex with fundamental group $D_{4n}$.  By Theorem A we know that $\pi_2(X)\cong J \oplus \ZD^r$, for some integer $r$.  Hence by Theorem \ref{shea} we know that $X$ is homotopy equivalent to a 2 dimensional finite cell complex. 
\end{proof}

\end{document}